\tikzstyle{vertex}=[circle,fill=white,draw,inner sep=0pt,minimum size=5pt]
\tikzstyle{vortex}=[circle,fill=lightgray,draw,inner sep=0pt,minimum size=5pt]
\tikzstyle{vartex}=[circle,fill=black,draw,inner sep=0pt,minimum size=5pt]
\newcommand{\vertex}{\node[vertex]}
\tikzstyle{bvertex}=[circle,fill=white,draw,inner sep=0pt,minimum size=3pt]
\tikzstyle{bvortex}=[circle,fill=lightgray,draw,inner sep=0pt,minimum size=3pt]
\tikzstyle{bvartex}=[circle,fill=black,draw,inner sep=0pt,minimum size=3pt]
\numberwithin{equation}{section}
\numberwithin{figure}{section}
\theoremstyle{plain}
\newtheorem{Thm}{Theorem}[section]
\newtheorem{Prop}[Thm]{Proposition}
\newtheorem{Lemma}[Thm]{Lemma}
\newtheorem{Cor}[Thm]{Corollary}
\theoremstyle{definition}
\newtheorem{Defn}[Thm]{Definition}
\newtheorem{Exam}[Thm]{Example}
\newtheorem*{Def*}{Definition}
\theoremstyle{remark}
\newtheorem*{Remarks}{Remarks}
\let \seq=\subseteq
\let \ToT=\Leftrightarrow
\let \al=\alpha
\let \be=\beta
\let \f=\varphi
\let \p=\ldots
\let \h=\text
\let \m=\mathbf
\let \cd=\cdots
\let \t=\theta
\newcommand{\Th}{Theorem}
\newcommand{\Ths}{Theorems}
\newcommand{\Co}{Corollary}
\newcommand{\Pro}{Proposition}
\newcommand{\Def}{Definition}
\newcommand{\Con}{Conjecture}
\newcommand{\Cons}{Conjectures}
\newcommand{\N}{\mathbf{N}}
\newcommand{\id}{\mathrm{id}}
\newcommand{\J}{\mathfrak{I}}
\newcommand{\Br}{\mathrm{Br}}
\newcommand{\s}{\underline{s}}
\newcommand{\z}{\underline{z}}
\newcommand{\Ss}{\underline{S}}
\newcommand{\Rh}{\hat{\mathcal{R}}}
\newcommand{\ord}{\mathrm{ord}}
\newcommand{\ala}{\al_{\s,\s'}}
\newcommand{\alb}{\al_{\s',\s}}
\newcommand{\sss}{\s \, \s}
\newcommand{\ssp}{\s \, \s'}
\newcommand{\move}[1]{\overset{\h{#1}}{\sim}}
\title{A word property for twisted involutions in Coxeter groups}
\author{Mikael Hansson \and Axel Hultman}
\thanks{}
\address{Department of Mathematics, Link\"oping University, SE-581 83 Link\"oping, Sweden}
\email{mikael.hansson@liu.se, axel.hultman@liu.se}
\begin{document}

\begin{abstract}
Given an involutive automorphism $\theta$ of a Coxeter system $(W,S)$, let $\mathfrak{I}(\theta) \subseteq W$ denote the set of twisted involutions. We provide a minimal set of moves that can be added to the braid moves, in order to connect all reduced $\underline{S}$-expressions (also known as admissible sequences, reduced $I_\theta$-expressions, or involution words) for any given $w \in \mathfrak{I}(\theta)$. This can be viewed as an analogue of the well-known word property for Coxeter groups. It improves upon a result of Hamaker, Marberg, and Pawlowski, and generalises similar statements valid in certain types due to Hu, Zhang, Wu, and Marberg.
\end{abstract}

\maketitle

\section{Introduction} \label{intro}

A fundamental result in the theory of Coxeter groups is the \emph{word property}, due to Matsumoto~\cite{Matsumoto} and Tits~\cite{Tits}. Given a Coxeter system $(W,S)$ and two generators $s,s' \in S$ such that $ss'$ has finite order $m(s,s')$, let $\al_{s,s'}$ denote the word $ss's \cd$ of length $m(s,s')$. Operating on words in the free monoid $S^*$, call the replacement of $\al_{s,s'}$ by $\al_{s',s}$ a \emph{braid move}. A word in $S^*$ representing $w \in W$ is called a \emph{reduced word} for $w$ if $w$ cannot be represented by a product of fewer elements of $S$.

\begin{Thm}[Word property \mbox{\cite{Matsumoto,Tits}}] \label{word}
Let $(W,S)$ be a Coxeter system and let $w \in W$. Then any two reduced words for $w$ can be connected by a sequence of braid moves.
\end{Thm}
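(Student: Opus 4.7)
The plan is to argue by induction on $\ell(w)$, with the base case $\ell(w)=0$ being vacuous since the only reduced word for the identity is empty. For the inductive step, let $\m{s} = s_1 \cd s_k$ and $\m{t} = t_1 \cd t_k$ be two reduced words for $w$. If $s_1 = t_1$, then $s_2 \cd s_k$ and $t_2 \cd t_k$ are reduced words for the strictly shorter element $s_1 w$; the inductive hypothesis supplies a chain of braid moves between them, and prepending $s_1$ to every word in the chain yields a chain from $\m{s}$ to $\m{t}$.

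The essential case is $s := s_1 \ne t_1 =: t$. The crux of the proof, and what I view as the main obstacle, is the following lemma:

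\emph{If $s \ne t$ are elements of $S$ with $\ell(sw) < \ell(w)$ and $\ell(tw) < \ell(w)$, then $m(s,t)$ is finite and $w$ admits a reduced word beginning with the alternating word $\al_{s,t}$ of length $m(s,t)$.}

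The plan for the lemma is an iterative application of the exchange condition. Starting from $\m{s}$, the hypothesis $\ell(tw) < \ell(w)$ together with the exchange condition produces a reduced word for $w$ of the form $t \cdot s_1 \cd \hat{s_i} \cd s_k$ for some $i$; since $s_1 = s \ne t$, the deleted letter is not $s_1$, and the new word begins with $ts$. Iterating, using alternately that $s$ or $t$ is a left descent of $w$, one builds a sequence of reduced words for $w$ whose initial alternating prefixes strictly grow in length. The length of any reduced alternating word in $s,t$ is bounded by $m(s,t)$, so the iteration must terminate with a reduced word of the form $\al_{s,t} \cdot \m{v}$; a short additional argument inspecting the dihedral subgroup $\langle s,t \rangle$ rules out the case $m(s,t) = \1$, since otherwise the iteration would produce arbitrarily long initial alternating segments within a word of fixed length $k$.

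Given the lemma, choose such a reduced word $\m{u} = \al_{s,t} \cdot \m{v}$ for $w$, and set $\m{u}' = \al_{t,s} \cdot \m{v}$. Since $\al_{s,t}$ and $\al_{t,s}$ represent the same element of $W$, the word $\m{u}'$ also represents $w$ and has length $k$, hence is reduced; and a single braid move converts $\m{u}$ into $\m{u}'$. Now $\m{s}$ and $\m{u}$ share the initial letter $s$, while $\m{t}$ and $\m{u}'$ share the initial letter $t$, so the already-handled case furnishes chains of braid moves connecting $\m{s}$ to $\m{u}$ and $\m{u}'$ to $\m{t}$. Concatenating these with the braid move $\m{u} \leftrightarrow \m{u}'$ yields the required chain from $\m{s}$ to $\m{t}$, completing the induction.
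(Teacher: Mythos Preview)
The paper does not supply its own proof of Theorem~\ref{word}; it is quoted as a classical result of Matsumoto and Tits, and the paper's contribution is the twisted-involution analogue, Theorem~\ref{MT}. So there is no proof in the paper to compare yours against directly.

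That said, your argument is the standard one and is correct. The only place that deserves a closer look is the sentence ``one builds a sequence of reduced words for $w$ whose initial alternating prefixes strictly grow in length.'' When you apply the exchange condition to a reduced word with alternating prefix $\underbrace{aba\cdots}_{j}$ (prepending the other generator $b$), deleting any position $i<j$ produces adjacent equal letters and hence a non-reduced word of the correct length, a contradiction; deleting a position $i>j$ yields prefix length $j+1$ as you claim; but deleting position $i=j$ yields a reduced word with alternating prefix $\underbrace{bab\cdots}_{j}$ and \emph{the same tail}, forcing $\underbrace{aba\cdots}_{j}=\underbrace{bab\cdots}_{j}$ in $W$, which happens only when $j\ge m(s,t)$. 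So the $i=j$ case is exactly where the iteration halts, and your termination argument goes through. It is worth making this dichotomy explicit rather than asserting strict growth at every step.

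It may interest you that the paper's proof of Theorem~\ref{MT} mirrors your strategy almost exactly: induction on rank, the case of equal last letters handled by the inductive hypothesis, and the case of distinct last letters handled by producing reduced $\Ss$-expressions ending in the longest possible alternating string (their Lemmas~\ref{group}--\ref{ML}). The difference is that for twisted involutions such a maximal alternating suffix need not reach length $m(s,s')$; the graph $G(w,\theta)$ records when it does, and the failure of connectivity of this graph is precisely what forces the additional ``initial moves'' beyond braid moves.
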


Given an involutive automorphism $\t$ of $(W,S)$, let
\[
\J(\t)=\{w \in W \mid \t(w)=w^{-1}\}
\]
be the set of \emph{twisted involutions}. Note that $\J(\id)$ is the set of ordinary involutions in $W$. Like $W$, $\J(\t)$ can be described in terms of ``words'' and ``reduced expressions''. Namely, with the alphabet $\Ss=\{\s \mid s \in S\}$, every word in $\Ss^*$ represents a twisted involution, and conversely, every twisted involution can be represented in this way; see Section~\ref{notation} for the details. The shortest possible representatives of $w \in \J(\t)$ are called \emph{reduced $\Ss$-expressions} for $w$; let $\Rh_\t(w)$ be the set of all such representatives.

In combinatorial approaches to Coxeter group theory, the study of (reduced) words is central. Analogously, (reduced) $\Ss$-expressions form the combinatorial foundation for $\J(\t)$; systematic study of $\J(\t)$ was initiated by Richardson and Springer~\cite{R-S1,R-S2,Springer} because of its connections with Borel orbit decompositions of symmetric varieties. In the former case, \Th~\ref{word} is a cornerstone. The goal of this paper is to identify the correct analogue in the latter setting.

In the context of $\Ss$-expressions, a \emph{braid move} is the replacement of $\ala$ by $\alb$. It is known that braid moves preserve $\Rh_\t(w)$ (it follows from \cite{R-S1} when $W$ is a Weyl group, and from \cite{H-M-P} for general $W$), but they do not in general suffice to connect all of $\Rh_\t(w)$. For example, when $W$ is the symmetric group $S_4$, $s_i=(i,i+1)$, and $\t=\id$, the reduced $\Ss$-expressions $\s_2\s_3\s_1\s_2$ and $\s_3\s_2\s_1\s_2$ both represent the same (ordinary) involution, namely the longest group element, but they cannot be connected using only braid moves.

For the purposes of the introduction, a \emph{half-braid move} amounts to replacing $\ssp$ with $\s'\s$ (or $\ssp\s$ with $\s'\ssp$) if these are the first letters in a reduced $\Ss$-expression, and $m(s,s')=3$ ($m(s,s')=4$). A more general definition is given in Section~\ref{applications}.

When $W$ is of type~$A$ and $\t=\id$, Hu and Zhang~\cite[\Th~3.1]{H-Z} proved that braid moves and half-braid moves suffice to connect $\Rh_\t(w)$. They extended this result to types~$B$ and $D$ (also with $\t=\id$) in \cite{H-Z_BD} (\Ths~3.10 and 4.8); in either of these types, braid moves, half-braid moves, and one special, additional move (involving more than two different letters) are enough. With Wu~\cite{H-Z-W}, they presented a similar assertion for type~$F_4$.

Recently, Marberg~\cite[\Th~1.4]{Marberg2} generalised these results to all finite and affine Coxeter groups, for arbitrary $\t$. Aided by a computer, he identified moves that are necessary and sufficient in such groups. He also presented several conjectures about the general situation.

Before \cite{Marberg2}, Hamaker, Marberg, and Pawlowski~\cite[\Th~7.9]{H-M-P} provided an answer for general $W$ and $\t$. Under certain circumstances, they allow replacing $\ssp\s \cd$ with $\s'\ssp \cd$ even if the number of letters is less than $m(s,s')$. The set of all such \emph{involution braid relations} is always sufficient for connecting $\Rh_\t(w)$.

As noted in \cite{H-M-P}, in type~$A$, \cite[\Th~7.9]{H-M-P} requires the addition of many more moves to the braid moves than just the half-braid moves, so it reduces to a weaker statement than \cite[\Th~3.1]{H-Z}. The same is true in types~$B$ and $D$: the set of all involution braid relations is much larger than necessary for connecting $\Rh_\t(w)$. Our main result provides, for any Coxeter system $(W,S)$ with any involutive automorphism $\t$, a minimal set of moves that must be added to the braid moves in order to connect $\Rh_\t(w)$. In fact, we show that Marberg's moves from \cite{Marberg2} suffice in arbitrary Coxeter systems with arbitrary $\t$. In particular, we confirm his aforementioned conjectures. Our approach is different from Marberg's, and thus provides an independent road to his results as a special case.

An \emph{initial move} is the replacement of one element in $\Rh_\t(v)$ (for some $v \in \J(\t)$) by another, in the beginning of a reduced $\Ss$-expression for some $w \in \J(\t)$. Both half-braid moves and the two special Hu-Zhang moves in types~$B$ and $D$ are initial moves. In the former case, $v$ is the longest element of a dihedral parabolic subgroup ($I_2(3)$ in types~$A$ and $D$, and $I_2(3)$ or $I_2(4)$ in type~$B$), and in the latter case, $v$ is the longest element of a parabolic subgroup of type~$B_3$ or $D_4$, respectively.

The following is the main result of the present paper. It describes precisely the initial moves that are necessary and sufficient to add to the braid moves, in order to connect all of $\Rh_\t(w)$. As such, it can be viewed as an analogue of \Th~\ref{word} for $\J(\t)$.

If $J \seq S$ generates a finite parabolic subgroup $W_J$ of $W$, let $w_0(J)$ denote its longest element. When $\t(J)=J$, $\t_J$ is the restriction of $\t$ to $W_J$.

\begin{Thm} \label{MT}
Let $(W,S)$ be a Coxeter system with an involutive automorphism $\t$, and let $w \in \J(\t)$. Then any two reduced $\Ss$-expressions for $w$ can be connected by a sequence of braid moves and initial moves that replace $x$ with $y$ if $x,y \in \Rh_\t(w_0(J))$ for some $\t$-stable $J \seq S$. The following $W_J$ and $\t_J$ are necessary and sufficient:
\begin{itemize}
  \item $W_J$ of type~$A_3$ with $\t_J \neq \id$;
  \item $W_J$ of type~$B_3$;
  \item $W_J$ of type~$D_4$ with $\t_J=\id$;
  \item $W_J$ of type~$H_3$;
  \item $W_J$ of type~$I_2(m)$, $3 \leq m<\1$, with $\t_J=\id$;
  \item $W_J$ of type~$I_2(m)$, $2 \leq m<\1$, with $\t_J \neq \id$.
\end{itemize}
\end{Thm}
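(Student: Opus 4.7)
\medskip

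\noindent\textbf{Proof plan.}
The statement has two directions: sufficiency (the listed initial moves together with braid moves connect $\Rh_\t(w)$) and necessity (each listed pair $(W_J,\t_J)$ must appear).

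For sufficiency, my approach is induction on $\ell_\t(w)$, the length of a reduced $\Ss$-expression for $w$. Let $\s,\s'\in\Rh_\t(w)$ and write $\s=\s_1\cd\s_\ell$, $\s'=\s'_1\cd\s'_\ell$. If $\s_1=\s'_1$, the Richardson--Springer monoid action implies that deleting $\s_1$ from each produces reduced $\Ss$-expressions for a common twisted involution $w'\in\J(\t)$ of strictly smaller length, and induction finishes the case. So the real content is the case $\s_1\neq\s'_1$: one must transform $\s'$ through the permitted moves into an expression starting with $\s_1$. Following the standard ``first-descent'' strategy, I would look for a common initial segment of $\s$ (up to equivalence) and $\s'$ (up to equivalence) that represents $w_0(J)$ for some $\t$-stable $J\seq S$. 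Concretely, I would define, for each allowed first letter $\underline{a}$ of a reduced $\Ss$-expression for $w$, the set of twisted involutions $v\in\J(\t)$ admitting reduced $\Ss$-expressions as prefixes of reduced $\Ss$-expressions for $w$; by an exchange-type argument, the collection of such $v$ is closed under join in the Bruhat order on $\J(\t)$, and the join $v_0$ of the ``$\underline{a}$-starting'' and ``$\underline{b}$-starting'' prefixes is $w_0(J)$ for a parabolic of relatively small rank. Consequently, suitable prefixes of $\s$ and $\s'$ are reduced $\Ss$-expressions for $w_0(J)$, so a single initial move from $W_J$ unifies their first letters, after which induction applies.

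The main obstacle is showing that the parabolic $W_J$ produced by this construction always belongs to the six listed types. For this I would invoke \cite[\Th~7.9]{H-M-P} to know that \emph{some} initial-type moves suffice, then carry out a case analysis by rank of $J$, combined with the classification of finite irreducible Coxeter groups. Rank~$2$ gives exactly the two $I_2(m)$ families. Rank~$3$ yields $A_3$, $B_3$, $H_3$, with the case $A_3$ and $\t_J=\id$ eliminated by hand: any initial move in $\Rh_{\id}(w_0(A_3))$ is expressible as a composition of braid moves together with initial moves from dihedral parabolics. Rank~$4$ yields $D_4$ with $\t_J=\id$; other rank-$4$ types ($A_4$, $B_4$, $F_4$, $H_4$, $\tilde A_3$, etc.) must be shown ``reducible'', i.e.\ each initial move in $\Rh_{\t_J}(w_0(J))$ decomposes into braid moves and initial moves from strictly smaller $\t$-stable parabolics already on the list; and all ranks $\geq 5$ likewise. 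This step is the combinatorial heart of the proof and likely requires explicit verification in each type, possibly aided by small computations.

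For necessity, for each listed pair $(W_J,\t_J)$, I would exhibit two specific reduced $\Ss$-expressions for $w_0(J)$ that cannot be connected by braid moves together with initial moves originating from any $\t$-stable proper subparabolic of $W_J$ (since $w_0(J)$ itself forces a move from $(W_J,\t_J)$). The rank-$2$ instances reduce to the impossibility of connecting $\s\s'$ and $\s'\s$ (or $\s\s'\s$ and $\s'\s\s'$) by braid moves alone. The higher-rank cases ($A_3$ with $\t_J\neq\id$, $B_3$, $D_4$ with $\t_J=\id$, $H_3$) were essentially established in \cite{H-Z,H-Z_BD,H-Z-W,Marberg2}; I would cite these and supply any gaps. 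The necessity statement then lifts to general $(W,S)$ because any $\t$-stable $J\seq S$ of one of these types embeds as a standard parabolic, making its $w_0(J)$ a twisted involution of the ambient $W$ and its reduced $\Ss$-expressions local witnesses to disconnectedness.
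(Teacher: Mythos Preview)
Your overall inductive scheme is reasonable, but it diverges substantially from the paper's argument, and the place where you defer the work is exactly where the paper's key idea lives.

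The paper does not argue via joins of prefixes or by invoking \cite{H-M-P} as a black box. Instead it introduces a graph $G(w,\t)$ on the vertex set $D_R(w)$, with an edge $\{s,s'\}$ precisely when $w$ admits a reduced $\Ss$-expression ending in the length-$m(s,s')$ alternating word in $\s,\s'$. Two reduced $\Ss$-expressions ending in $\s$ and $\s'$ can be linked by braid moves plus lower-rank moves whenever $s$ and $s'$ lie in the same component of $G(w,\t)$; thus new initial moves are needed only when $G(w,\t)$ is disconnected. A short argument (every right descent ``passes through'' $w$) shows that disconnectedness forces $w=w_0(J)$ for some finite $\t$-stable $J$. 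Finally, edges of $G(w_0(J),\t_J)$ are characterised via the involution $\varphi:x\mapsto w_0 x w_0$: when $\varphi=\t_J$ the graph is the complement of the Coxeter graph of $W_J$, which is disconnected exactly in ranks $2$, $3$, and type $D_4$; when $\varphi\neq\t_J$ only the dihedral cases survive. This yields the list uniformly, with no type-by-type computation beyond reading off which Coxeter-graph complements are disconnected.

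By contrast, your plan has a genuine gap at the step ``the join $v_0$ \ldots\ is $w_0(J)$ for a parabolic of relatively small rank''. You give no mechanism that bounds the rank of $J$; a priori the join could be $w$ itself, and indeed in the paper's argument the obstruction occurs precisely when $w=w_0(J)$ in toto, not for some small prefix. Your subsequent plan (check for each finite type not on the list that $\Rh_{\t_J}(w_0(J))$ is already connected by smaller moves) is in principle feasible but is exactly the kind of exhaustive verification the paper avoids; you acknowledge it ``likely requires explicit verification in each type, possibly aided by small computations'', which is not a proof. Also note that the paper inducts on the \emph{last} letter, compatible with the right action of $\Ss^*$; your first-letter deletion ``produces reduced $\Ss$-expressions for a common twisted involution $w'$'' is not immediate from the right action as defined and would need a separate justification.
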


\begin{Remarks}
It should be observed that the listed $W_J$ (apart from $I_2(2)$) are precisely the finite types for which the complement of the Coxeter graph is disconnected. This fact is explained in the proof of Lemma~\ref{disconnected}.

Note that there is no need to specify $\t_J$ in types~$B_3$ and $H_3$ since they admit no non-trivial Coxeter system automorphism. Also, observe that the reducible dihedral group $I_2(2)$ does not appear in the list when $\t_J=\id$. The corresponding move \emph{is} allowed, but it is a braid move in this case.
\end{Remarks}

We conclude this section with an outline of the paper. In Section~\ref{notation}, the necessary definitions and previous results are recalled. Then, in Section~\ref{proof}, we prove \Th~\ref{MT}. It follows from the proof that, in fact, only one move of each type is needed, if $x$ and $y$ are appropriately chosen. An explanation is given in Section~\ref{disc}, where a list of such $x$ and $y$ is also provided. This clarifies how the results from \cite{H-Z,H-Z_BD,H-Z-W,Marberg2} are recovered from ours. Finally, in Section~\ref{applications}, consequences of \Th~\ref{MT} in some special cases are discussed.

\section{Notation and preliminaries} \label{notation}

As general references on Coxeter group theory, the reader could consult \cite{B-B} or \cite{Humphreys}. We assume familiarity with the basics but reiterate some of it here in order to agree on notation. Some useful tools for dealing with twisted involutions are also reviewed. For finite Coxeter groups, they appear in Richardson and Springer~\cite{R-S1}. In the general case, everything can be found in \cite{Hultman2,Hultman3} from which our notation is taken.

Let $(W,S)$ be a Coxeter system. In the sequel, we shall often find it important to distinguish notationally between words in the monoid $S^*$ and the elements of $W$ they represent. From now on, a sequence of generators inside square brackets indicates an element of $S^*$, whereas a sequence without brackets is an element of $W$. Thus, if $s_i \in S$, then $[s_1 \cd s_k]\in S^*$ and $s_1 \cd s_k\in W$.

For $w \in W$, its \emph{length} $\ell(w)$ is the smallest integer $k$ such that $w=s_1 \cd s_k$ for some $s_i \in S$; $[s_1 \cd s_k]$ is then called a \emph{reduced word} (or a \emph{reduced expression}) for $w$.

Let $\t$ be an involutive automorphism of $(W,S)$. Recall the set of twisted involutions $\J(\t)$ defined in the introduction. Like $W$, $\J(\t)$ can be described using words, this time in the monoid $\Ss^*$.

\begin{Defn} \label{action}
The free monoid $\Ss^*$ acts from the right on the set $W$ by
\[
w\s=\begin{cases}ws & \h{if $\t(s)ws=w$,} \\ \t(s)ws & \h{otherwise,}\end{cases}
\]
and $w\s_1 \cd \s_k=(\cd((w\s_1)\s_2)\cd)\s_k$. Observe that $w\sss=w$ for all $w \in W$ and all $s \in S$. We write $\s_1 \cd \s_k$ for $e\s_1 \cd \s_k$, where $e \in W$ is the identity element.
\end{Defn}

The orbit of $e$ under this action is precisely $\J(\t)$:

\begin{Lemma}[\mbox{\cite[\Pro~3.5]{Hultman2}}] \label{orbit}
We have
\[
\J(\t)=\{\s_1 \cd \s_k \mid \s_1,\p,\s_k \in \Ss, \: k \in \N\}.
\]
\end{Lemma}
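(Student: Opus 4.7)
The plan is to prove the two inclusions separately. For $\J(\t) \qes \{\s_1\cd\s_k\}$, I would induct on $k$, the substantive step being the stability claim that $w \in \J(\t)$ and $\s \in \Ss$ imply $w\s \in \J(\t)$. In the first branch of Definition~\ref{action}, the hypothesis $\t(s)ws = w$ rearranges to $\t(s) = wsw^{-1}$, giving $\t(ws) = \t(w)\t(s) = w^{-1} \cdot wsw^{-1} = sw^{-1} = (ws)^{-1}$; the other branch is symmetric. Since $e \in \J(\t)$, the inclusion follows.

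For the reverse inclusion, I would induct on $\ell(w)$. The case $\ell(w)=0$ is trivial. For $\ell(w) > 0$, fix a right descent $s$ of $w$. Using $\t(w)=w^{-1}$ and the length-preserving property of $\t$ (which holds because $\t$ permutes $S$), a brief calculation yields $\ell(\t(s)w) = \ell(ws) = \ell(w)-1$, so $\t(s)$ is also a left descent of $w$. I then split on whether $\t(s)ws = w$. If so, set $v = ws$; then $v \in \J(\t)$ with $\ell(v)=\ell(w)-1$, and $\t(s)vs = \t(s)w = ws = v$, so $v\s = vs = w$. Otherwise, set $v = \t(s)ws$; then $v \in \J(\t)$ by direct computation, and the second branch of the action yields $v\s = \t(s)vs = w$. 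In either case, an $\Ss$-expression for $v$ supplied by the induction hypothesis extends by $\s$ to one for $w$.

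The key obstacle is verifying $\ell(v) < \ell(w)$ in the second case, i.e., $\ell(\t(s)ws) < \ell(w)$. I would argue this by a dihedral double-coset analysis: the set $\{w,\t(s)w,ws,\t(s)ws\}$ is the $\langle\t(s)\rangle$-$\langle s\rangle$-double coset of $w$, in which $w$ is maximal since $\t(s)$ is a left descent and $s$ is a right descent. A short case check shows the coset has either $4$ or $2$ elements, and that it has exactly $2$ elements precisely when $\t(s)w=ws$, equivalently $\t(s)ws=w$. The hypothesis $\t(s)ws \neq w$ thus forces the coset to have $4$ elements, in which case $\t(s)ws$ is its minimum element, of length $\ell(w)-2$. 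This completes the induction.
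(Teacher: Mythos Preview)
The paper does not supply its own proof of this lemma; it is quoted from \cite{Hultman2} (Proposition~3.5 there) without argument. So there is nothing in the present paper to compare your proposal against directly.

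That said, your argument is correct and is essentially the standard proof. Both inclusions are handled properly. For the stability step in $\supseteq$, the ``other branch'' is not literally symmetric but is an equally short computation: $\t(\t(s)ws)=s\,w^{-1}\t(s)=(\t(s)ws)^{-1}$. For the length inequality in the second case of $\subseteq$, your double-coset reasoning is valid; an equivalent bare-hands version is: write a reduced word $w=\t(s)a_2\cdots a_k$, apply the exchange property to $s\in D_R(w)$ to delete some $a_i$, and observe that deleting the initial $\t(s)$ gives exactly the excluded case $\t(s)ws=w$, while deleting any later letter leaves a reduced word of length $k-1$ beginning with $\t(s)$, so $\ell(\t(s)ws)=k-2$. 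Either way, the induction closes.
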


Again, we shall henceforth use square brackets to indicate that a sequence of letters should be interpreted as an element of the monoid. If $w=\s_1 \cd \s_k$ for some $\s_i \in \Ss$, we call $[\s_1 \cd \s_k] \in \Ss^*$ an \emph{$\Ss$-expression} for $w$. It is \emph{reduced} if no $\Ss$-expression for $w$ consists of fewer than $k$ elements of $\Ss$. In this case, $\rho(w)=k$ is the \emph{rank} of $w$.

Given an $\Ss$-expression $\epsilon \in \Ss^*$, $\ord(\epsilon)$ denotes the element of $S^*$ obtained by expanding $\epsilon$ according to \Def~\ref{action}.

\begin{Exam}
Let $W=S_4$ with $s_i=(i,i+1)$. If $\t=\id$, $\ord([\s_1\s_2\s_3\s_2])=[s_3s_2s_1s_2s_3s_2]$, which is a reduced expression for the longest element $w_0 \in W$. On the other hand, if $\t \neq \id$ (i.e., $\t(s_i)=s_{5-i}$), we have $\ord([\s_1\s_2\s_3\s_2])=[s_2s_2s_3s_1s_2s_3s_2]$, which is not reduced.
\end{Exam}

Some remarks about the terminology are in order. We follow the notation used in \cite{Hultman2,Hultman3}. What we call an $\Ss$-expression is the right handed version of an ``$I_*$-expression'' \cite{H-Z,H-Z_BD,H-Z-W,Marberg1}. \emph{Reduced} $\Ss$-expressions are the same as (again, right handed versions of) ``admissible sequences'' \cite{R-S1} and ``involution words'' \cite{H-M-P,Marberg2}. One should furthermore note that Richardson and Springer mostly use a monoid action, denoted by $*$ in \cite{R-S1}, which is different from that defined in \Def~\ref{action} above although they coincide on reduced $\Ss$-expressions (i.e., admissible sequences).

\begin{Defn} \label{absolute}
Let $[\s_1 \cd \s_k]$ be a reduced $\Ss$-expression for $w \in \J(\t)$. Then the \emph{twisted absolute length} of $w$, denoted $\ell^\t(w)$, is the number of indices $i \in [k]$ such that $\s_1 \cd \s_{i-1}\s_i=\s_1 \cd \s_{i-1}s_i$.
\end{Defn}

It follows from \cite[\Pro~2.5]{Hultman3} that this definition of $\ell^\t(w)$ is independent of the choice of reduced $\Ss$-expression for $w$. When $\t=\id$, $\ell^\t(w)$ coincides with the \emph{absolute length} of $w$, i.e., the smallest number of reflections whose product is $w$, see \cite{Hultman1}.

Given $w \in W$, let $D_R(w)=\{s \in S \mid \ell(ws)<\ell(w)\}$ and $D_L(w)=\{s \in S \mid \ell(sw)<\ell(w)\}$ be the sets of \emph{right descents} and \emph{left descents}, respectively. Observe that $D_R(w)=D_L(\t(w))$ if $w \in \J(\t)$.

\begin{Lemma}[\mbox{\cite{Hultman2}}] \label{mixed}
If $w \in \J(\t)$ and $s \in S$, then $\rho(w\s)=\rho(w) \pm 1$. Moreover, the following statements are equivalent:
\begin{itemize}
  \item $s \in D_R(w)$;
  \item $\rho(w\s)=\rho(w)-1$;
  \item some reduced $\Ss$-expression for $w$ ends with $\s$.
\end{itemize}
\end{Lemma}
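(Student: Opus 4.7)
The plan is to pivot everything through one key fact: the $\ord$-expansion of any reduced $\Ss$-expression for $w \in \J(\t)$ is itself a reduced word for $w$ in $S^{*}$. Since each of the $\rho(w)$ monoid-action steps contributes either $1$ letter (single-letter step) or $2$ letters (sandwich step) to the expansion, this yields, in view of \Def~\ref{absolute}, the identity $2\rho(w) = \ell(w) + \ell^\t(w)$, which together with $w\sss = w$ from \Def~\ref{action} is the structural engine of the whole argument.

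For the first assertion, appending $\s$ to a reduced $\Ss$-expression for $w$ gives one for $w\s$ of length $\rho(w)+1$, hence $\rho(w\s) \le \rho(w)+1$; applied with $w\s$ in place of $w$ (using $(w\s)\s=w$) this yields the symmetric inequality $\rho(w) \le \rho(w\s)+1$. To rule out $\rho(w\s)=\rho(w)$, I would first observe that $w\s \ne w$ directly from \Def~\ref{action} and then that $w$ and $w\s$ are always Bruhat-comparable: in the single-letter case $w\s = ws$ this is classical, while in the sandwich case the symmetry $s \in D_R(w) \iff \t(s) \in D_L(w)$ (valid for $w \in \J(\t)$ because $\t(w)=w^{-1}$) forces $\ell(\t(s)ws)-\ell(w) \in \{\pm 2\}$, giving comparability. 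The conclusion $\rho(w\s) \ne \rho(w)$ then follows from the graded poset structure of $(\J(\t),\le)$ with rank $\rho$ \cite{Hultman2}.

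For the equivalences, the implication (c)$\To$(b) is immediate by truncation; (b)$\To$(c) follows by appending $\s$ to any reduced $\Ss$-expression for $w\s$ of length $\rho(w)-1$, producing a length-$\rho(w)$ and hence reduced $\Ss$-expression for $(w\s)\s=w$ ending in $\s$; and (c)$\To$(a) holds because the $\ord$-expansion of a reduced $\Ss$-expression ending in $\s$ is a reduced word for $w$ whose last letter is $s$, so $s \in D_R(w)$.

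The main obstacle is the remaining implication (a)$\To$(c). My plan there is: fix a reduced $\Ss$-expression $[\s_1\cd\s_k]$ for $w$ with $\ord$-expansion $[s_1'\cd s_m']$, a reduced word for $w$; since $s \in D_R(w)$, the Strong Exchange Property in $W$ supplies an index $j$ such that deleting $s_j'$ from the expansion yields a reduced word for $ws$. A case analysis on whether $s_j'$ arose from a single-letter step or from the first/second half of a sandwich step lifts this deletion to a modified reduced $\Ss$-expression for $w$ terminating in $\s$. A cleaner alternative bypasses the case analysis entirely: $s \in D_R(w)$ implies $ws<w$ in Bruhat order, whence the graded-structure input from the second paragraph forces $\rho(w\s)=\rho(w)-1$, giving (a)$\To$(b), and (b)$\To$(c) then closes the cycle.
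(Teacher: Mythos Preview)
The paper does not give its own proof of this lemma; it is quoted verbatim from \cite{Hultman2} in the preliminaries section. So there is nothing in the present paper to compare your argument against.

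That said, two comments on your sketch. First, your ``cleaner alternative'' for (a)$\Rightarrow$(b), and your argument ruling out $\rho(w\s)=\rho(w)$, both invoke the fact that $\Br(\J(\t))$ is graded with rank function $\rho$, citing \cite{Hultman2}. But in that source the gradedness is established in tandem with, and essentially via, the content of the lemma you are trying to prove; so this route is circular unless you replace that appeal with something more primitive. A non-circular fix is available: once you have shown that $w$ and $w\s$ are Bruhat-comparable with $\ell(w\s)\ne\ell(w)$, the subword property for $\J(\t)$ (Lemma~\ref{subword2}, from \cite{Hultman3}) gives $\rho(w\s)\ne\rho(w)$ directly, without invoking gradedness. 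Second, your assertion that in the sandwich case $\ell(\t(s)ws)-\ell(w)\in\{\pm 2\}$ does not follow merely from $s\in D_R(w)\iff\t(s)\in D_L(w)$; you also need the lifting property to exclude $\ell(\t(s)ws)=\ell(w)$ with $\t(s)ws\ne w$. Concretely: if $s\in D_R(w)$ and $\t(s)\in D_L(w)$, lifting applied to $ws<w$ shows that either $\t(s)\in D_L(ws)$, whence $\ell(\t(s)ws)=\ell(w)-2$, or $\t(s)ws\le w$ with $\ell(\t(s)ws)=\ell(w)$, forcing $\t(s)ws=w$. With these two patches your argument goes through.
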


The \emph{exchange property} is a fundamental property of Coxeter groups; in fact, it characterises them among groups generated by involutions, see \cite{Matsumoto}.

\begin{Lemma}[Exchange property]
If $[s_1 \cd s_k]$ is a reduced expression and $s \in D_R(s_1 \cd s_k)$, then $s_1 \cd s_ks=s_1 \cd s_{i-1}s_{i+1} \cd s_k$ for some $i \in [k]$.
\end{Lemma}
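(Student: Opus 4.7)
My plan is to prove this classical fact via the standard geometric theory. Using the Tits representation on the real vector space with basis $\{\alpha_s\}_{s \in S}$, $W$ acts by reflections on the associated root system $\Phi = \Phi^+ \sqcup \Phi^-$, with each $s \in S$ acting as the reflection in its simple root $\alpha_s$. The key input I would invoke is the inversion-set enumeration (proved by induction on $\ell(w)$, using that $\ell$ changes by exactly one under right multiplication by a simple reflection): for any reduced expression $[s_1 \cdots s_k]$ of $w$, the set $N(w^{-1}) = \{\alpha \in \Phi^+ : w(\alpha) \in \Phi^-\}$ consists of the $k$ distinct positive roots
\[
\gamma_j = s_k s_{k-1} \cdots s_{j+1}(\alpha_{s_j}), \qquad 1 \leq j \leq k.
\]

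Granting this, the argument is brief. The assumption $s \in D_R(w)$ is equivalent to $w(\alpha_s) \in \Phi^-$, i.e., $\alpha_s \in N(w^{-1})$. Hence $\alpha_s = \gamma_i$ for some $i \in [k]$, which gives $\alpha_s = s_k \cdots s_{i+1}(\alpha_{s_i})$. Conjugating the simple reflection $s_i$ by this group element yields the identity $s = (s_k \cdots s_{i+1})\, s_i\, (s_{i+1} \cdots s_k)$ in $W$. Multiplying $w = s_1 \cdots s_k$ on the right by this expression and telescoping --- the adjacent pairs $s_k s_k$, $s_{k-1} s_{k-1}, \ldots, s_{i+1} s_{i+1}$, and finally $s_i s_i$ cancel in succession --- produces $ws = s_1 \cdots s_{i-1}\, s_{i+1} \cdots s_k$, as claimed.

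The main obstacle is the inversion-set enumeration itself, a nontrivial foundational ingredient requiring a careful analysis of how simple reflections permute and change the sign of positive roots in the Tits representation. Since this is standard material treated in both \cite{B-B} and \cite{Humphreys}, I would cite it directly rather than reprove it. A purely combinatorial alternative route would be to deduce the statement from the so-called strong exchange property, but this ultimately rests on the same geometric input and amounts to essentially the same calculation repackaged.
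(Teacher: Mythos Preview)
Your argument is correct: the root-system inversion enumeration in the Tits representation is the standard geometric route to the exchange property, and the conjugation-and-cancellation step is handled cleanly.

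However, the paper does not actually prove this lemma. It is stated as background, introduced with the sentence ``The \emph{exchange property} is a fundamental property of Coxeter groups; in fact, it characterises them among groups generated by involutions, see \cite{Matsumoto}.'' No argument is given; the result is simply quoted as a classical fact (the general references \cite{B-B} and \cite{Humphreys} are invoked earlier for all such foundational material). So there is nothing to compare your approach to---you have supplied a proof where the paper intentionally supplies none. Your instinct to cite \cite{B-B} or \cite{Humphreys} for the inversion-set enumeration rather than reprove it is exactly in line with how the paper treats this level of material.
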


An analogous result holds for twisted involutions:

\begin{Lemma}[\mbox{\cite[\Pro~3.10]{Hultman2}}] \label{exchange}
If $[\s_1 \cd \s_k]$ is a reduced $\Ss$-expression and $s \in D_R(\s_1 \cd \s_k)$, then $\s_1 \cd \s_k\s=\s_1 \cd \s_{i-1}\s_{i+1} \cd \s_k$ for some $i \in [k]$.
\end{Lemma}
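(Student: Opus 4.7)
The plan is to deduce the statement from the classical Exchange Property above by passing through the $\ord$ expansion. Write $w = \s_1 \cd \s_k$ and set $[u_1,\p,u_m] = \ord([\s_1 \cd \s_k])$; each $\s_i$ contributes either $[s_i]$ or $[\t(s_i), s_i]$ to this ordinary word, according to whether the action at step $i$ proceeds by $w_{i-1}\s_i = w_{i-1}s_i$ or by $w_{i-1}\s_i = \t(s_i)w_{i-1}s_i$, so the length is $m = 2k - \ell^\t(w)$.

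The first step is to establish that $[u_1,\p,u_m]$ is a reduced expression for $w$ in $W$, equivalently that $\ell(w) = 2\rho(w) - \ell^\t(w)$. This is a standard identity for twisted involutions, which I would prove independently by induction on $k$ using Lemma~\ref{mixed}, so as to avoid circular reasoning. With this in hand, the classical Exchange Property applied to $[u_1,\p,u_m]$ and $s \in D_R(w) = D_R(u_1 \cd u_m)$ produces an index $j \in [m]$ such that $u_1 \cd u_m \cdot s = u_1 \cd u_{j-1}u_{j+1}\cd u_m$ and the right-hand side is still reduced. The position $j$ lies in the block contributed by a uniquely determined $\s_i$; this $\s_i$ is the candidate for deletion.

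The main obstacle is translating this ordinary-word deletion back to an $\Ss$-expression deletion: I need to show that $\s_1 \cd \s_{i-1}\s_{i+1}\cd\s_k$ equals $w\s$ in $\J(\t)$, not merely that the underlying ordinary words represent the same element of $W$. The delicate case is when $\s_i$ contributes the two-letter block $[\t(s_i), s_i]$ and $j$ picks out only one of its two positions: removing a single letter breaks the block structure, so a priori the shortened ordinary word is not $\ord$ of the shortened $\Ss$-expression. Resolving this requires, in that case, either arguing that the other letter of the block can be deleted instead (yielding the same element of $W$), or invoking the strong exchange property to reshuffle the expansion so that the deleted position falls at a block boundary. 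Once this structural matching is set up, Lemma~\ref{mixed} immediately gives $\rho(\s_1 \cd \s_{i-1}\s_{i+1}\cd\s_k) \le k-1 = \rho(w\s)$, so the shortened $\Ss$-expression is reduced and represents $w\s$, completing the argument.
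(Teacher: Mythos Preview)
The paper does not prove this lemma; it is quoted from \cite{Hultman2} (as Proposition~3.10 there) without argument, so there is no in-paper proof to compare against.

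On its own merits, your plan has the right overall shape---expand via $\ord$, invoke the classical exchange property, and translate back---but the translation step is a genuine gap, and your description contains a structural inaccuracy that makes that gap larger than you seem to realise. The two letters that a conjugation step $\s_i$ contributes to $\ord(\epsilon)$ are \emph{not} adjacent: $\t(s_i)$ is prepended on the left and $s_i$ is appended on the right, with all of $\ord([\s_1\cdots\s_{i-1}])$ (and any later left-additions) sitting between them; compare Lemma~\ref{reduced} and the example following Definition~\ref{absolute}. So there is no contiguous ``block $[\t(s_i),s_i]$'', and when the classical exchange deletes a left-hand letter $\t(s_{i})$, this does not obviously correspond to omitting $\s_i$: the $\Ss$-expression $[\s_1\cdots\s_{i-1}\s_{i+1}\cdots\s_k]$ re-evaluates every subsequent step starting from $w_{i-1}$ rather than from $w_i$, so its $\ord$ need not be the original $\ord$ with one (or two) letters removed. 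Neither of your proposed fixes clearly resolves this: there is no general reason why deleting the partner letter $s_i$ on the right should yield the same group element as deleting $\t(s_i)$ on the left, and ``reshuffling via strong exchange'' is not a concrete argument. To close the gap you would need a different mechanism---for instance, a direct induction on $k$ that stays entirely within the $\Ss$-calculus, using Lemma~\ref{mixed} together with a case analysis on whether $s=s_k$ and whether $(w\s_k)\s=(w\s)\s_k$.
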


The \emph{subword property}, due to Chevalley~\cite{Chevalley}, characterises the \emph{Bruhat order} on $W$.

\begin{Lemma}[Subword property] \label{subword1}
Let $u,w \in W$ and suppose $[s_1 \cd s_k]$ is a reduced expression for $w$. Then $u \leq w$ in the Bruhat order if and only if $[s_{i_1} \cd s_{i_m}]$ is a reduced expression for $u$, for some $1 \leq i_1<\cd<i_m \leq k$.
\end{Lemma}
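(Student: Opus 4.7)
The plan is to invoke the Strong Exchange Property, a strengthening of the Exchange Property stated above: if $[s_1 \cdots s_k]$ is a reduced expression for $w$ and $t$ is any reflection (not merely a simple one) with $\ell(tw) < \ell(w)$, then $tw = s_1 \cdots \hat{s}_i \cdots s_k$ for some $i \in [k]$. Combined with the standard definition of the Bruhat order as the transitive closure of the covering relations $v \lessdot tv$ (for $t$ a reflection and $\ell(tv) = \ell(v)+1$), both directions of the lemma then follow by induction.

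For the forward direction, suppose $u \leq w$ via a chain $u = u_0 \lessdot u_1 \lessdot \cdots \lessdot u_n = w$, and induct on $n$. The case $n = 0$ is immediate. For $n \geq 1$, applying the Strong Exchange Property to $w$ and the reflection $t$ with $u_{n-1} = tw$ yields a reduced expression for $u_{n-1}$ obtained by deleting a single letter from $[s_1 \cdots s_k]$. Induction applied to the shorter chain ending at $u_{n-1}$ then produces a reduced expression for $u$ as a subword of this one, hence as a subword of $[s_1 \cdots s_k]$.

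For the reverse direction, induct on $k$. Let $w' = s_1 \cdots s_{k-1}$, and suppose $[s_{i_1} \cdots s_{i_m}]$ is a reduced expression for $u$ that appears as a subword of $[s_1 \cdots s_k]$. If $i_m < k$, then by induction $u \leq w'$, and since $w' \lessdot w$ we conclude $u \leq w$. If $i_m = k$, set $u' = s_{i_1} \cdots s_{i_{m-1}}$; then $u = u' s_k$ with $\ell(u) = \ell(u')+1$ (because the expression for $u$ is reduced), induction yields $u' \leq w'$, and the Lifting Property (itself a consequence of Strong Exchange) upgrades this to $u = u' s_k \leq w' s_k = w$.

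The main obstacle is the Strong Exchange Property itself; once it is available, both halves are routine inductions. Its standard proof uses either the action of $W$ on its root system (tracking which positive roots are sent to negative ones by multiplication by generators) or an equivalent combinatorial comparison between reduced and non-reduced expressions that refines the argument already implicit in the Exchange Property above.
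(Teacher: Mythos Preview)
The paper does not prove this lemma at all; it is stated as a classical result and attributed to Chevalley~\cite{Chevalley}, with \cite{B-B} and \cite{Humphreys} given as general references. So there is no ``paper's own proof'' to compare against.

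Your outline is correct and is essentially the standard textbook argument (as in, e.g., Bj\"orner--Brenti or Humphreys). Two minor comments. In the forward direction you should note explicitly why the word obtained by deleting $s_i$ is \emph{reduced}: it has $k-1$ letters and represents $u_{n-1}$, which has length $k-1$ since $u_{n-1} \lessdot w$. In the reverse direction, the version of the Lifting Property you invoke is: if $s \in D_R(u)$ and $s \in D_R(w)$, then $u \leq w \Leftrightarrow us \leq ws$; applied with $s=s_k$ this turns $u' \leq w'$ into $u = u's_k \leq w's_k = w$, exactly as you say. Both steps are standard once Strong Exchange is granted, so your identification of Strong Exchange as the real content is accurate.
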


Again there is an analogous result for twisted involutions:

\begin{Lemma}[\mbox{\cite[\Th~2.8]{Hultman3}}] \label{subword2}
Let $u,w \in \J(\t)$ and suppose $[\s_1 \cd \s_k]$ is a reduced $\Ss$-expression for $w$. Then $u \leq w$ in the Bruhat order if and only if $[\s_{i_1} \cd \s_{i_m}]$ is a reduced $\Ss$-expression for $u$, for some $1 \leq i_1<\cd<i_m \leq k$.
\end{Lemma}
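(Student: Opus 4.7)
The plan is to prove both implications by induction on $k = \rho(w)$, using Lemmas~\ref{mixed} and~\ref{exchange} together with a lifting-type property for the Bruhat order restricted to $\J(\t)$. The base case $\rho(w) = 0$ forces $w = e$, so only $u = e$ satisfies $u \leq w$, and the empty $\Ss$-expression serves as the required (trivial) subword.

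For the inductive step I would set $s = s_k$ and $\s = \s_k$. By Lemma~\ref{mixed}, $[\s_1 \cd \s_{k-1}]$ is a reduced $\Ss$-expression for $w\s$ with $\rho(w\s) = k-1$, and $s \in D_R(w)$ implies $w\s < w$ in Bruhat order. The crucial auxiliary statement to establish first is a \emph{lifting property} for twisted involutions: given $u, v \in \J(\t)$ with $u \leq v$ and $s \in D_R(v)$, (i) $u \leq v\s$ always, and (ii) $u\s \leq v\s$ whenever $s \in D_R(u)$. This rests on the classical lifting property in $W$ (a consequence of Lemma~\ref{subword1}) together with Lemma~\ref{exchange}. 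A case split is required according to whether $\s$ acts on $v$ (resp.\ $u$) by ordinary multiplication ($\t(s)vs = v$) or by conjugation ($v\s = \t(s)vs$, a length-two move in $W$); one checks that in either configuration the classical subword/lifting machinery in $W$ delivers the required Bruhat inequalities between $u$, $u\s$, $v$, and $v\s$.

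With the lifting property in hand, both directions follow by parallel arguments. For $(\Rightarrow)$, assume $u \leq w$. By (i), $u \leq w\s$. If $s \notin D_R(u)$, induction applied to $w\s$ embeds a reduced $\Ss$-expression for $u$ as a subword of $[\s_1 \cd \s_{k-1}]$, hence of $[\s_1 \cd \s_k]$. If $s \in D_R(u)$, then (ii) gives $u\s \leq w\s$; induction provides a reduced $\Ss$-expression for $u\s$ as a subword of $[\s_1 \cd \s_{k-1}]$, and appending $\s$ produces the desired reduced $\Ss$-expression for $u$ (still reduced by Lemma~\ref{mixed} because $s \in D_R(u)$). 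For $(\Leftarrow)$, assume $[\s_{i_1} \cd \s_{i_m}]$ is a reduced $\Ss$-expression for $u$ as a subword of $[\s_1 \cd \s_k]$. If $i_m < k$, the subword lies in $[\s_1 \cd \s_{k-1}]$, so induction gives $u \leq w\s < w$. If $i_m = k$, then $[\s_{i_1} \cd \s_{i_{m-1}}]$ is a reduced $\Ss$-expression for $u\s$ (Lemma~\ref{mixed}) contained in $[\s_1 \cd \s_{k-1}]$, so induction yields $u\s \leq w\s$; a companion ``reverse lifting'' step, derivable from (i)--(ii) by contrapositive reasoning applied to $w$ (with Lemma~\ref{exchange} supplying the reduced $\Ss$-expression ending in $\s$), upgrades this to $u \leq w$.

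The hardest part will be establishing the lifting property, since the $\Ss$-action on $\J(\t)$ does not coincide with right multiplication in $W$ in the conjugation case. Passing from $v$ to $v\s = \t(s)vs$ shifts the length in $W$ by $\pm 2$, and one must verify that the classical subword property (Lemma~\ref{subword1}) still links the Bruhat positions of $u$, $u\s$, $v$, $v\s$ in the way the twisted lifting property demands; this is the only point in the argument where the distinction between the two modes of the $\Ss$-action is genuinely felt. Once the lifting property is secured, the induction runs as a direct twisted analogue of the standard inductive proof of Lemma~\ref{subword1}.
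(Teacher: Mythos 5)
The paper does not prove this lemma; it is imported verbatim from \cite[Th.~2.8]{Hultman3}, so there is no internal proof to compare against. Your overall architecture---first establish a lifting property for the Bruhat order restricted to $\J(\t)$ with respect to the $\Ss$-action, then run the standard subword induction on $\rho(w)$---is the route taken in the cited source, and the induction itself is routine once the lifting property is in place. But as written there are two genuine problems, both located in the lifting property, which is where all the content of the theorem sits.

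First, clause (i) is false as stated: ``$u \leq v\s$ always'' already fails for $u=v$ (since $v\s<v$), and it fails for proper $u<v$ whenever $s \in D_R(u)$. For instance, take $W=S_3$, $\t=\id$, $v=w_0=s_1s_2s_1$ and $u=s_2$; then $v\s_2=\t(s_2)vs_2=s_1$, and $s_2 \not\leq s_1$. The correct trichotomy for $u\leq v$ and $s\in D_R(v)$ is: $u\s\leq v$ \emph{always}; $u\leq v\s$ when $s\notin D_R(u)$; and $u\s\leq v\s$ when $s\in D_R(u)$. You have the unconditional inequality on the wrong side. Your forward direction survives because it only invokes (i) in the branch $s\notin D_R(u)$, but your ``reverse lifting'' step in the backward direction (upgrading $u\s\leq w\s$ to $u\leq w$) is exactly an application of the missing unconditional clause $x\s\leq y$ applied to $x=u\s\leq w\s<w=y$; it is not derivable from (i)--(ii) as you state them. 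Second, the verification of the lifting property is waved off with ``one checks that in either configuration the classical machinery delivers the required inequalities.'' That check \emph{is} the proof: in the conjugation case $v\s=\t(s)vs$ one must apply the classical lifting property twice, once on the right with $s$ and once on the left with $\t(s)$, using $s\in D_R(u) \Leftrightarrow \t(s)\in D_L(u)$ for $u\in\J(\t)$ to control the hypotheses of the second application. The case analysis does close up, but it is not written, and with clause (i) misstated you would be attempting to verify a false assertion in at least one branch. Correct the statement of the lifting property and carry out the four-case verification, and the rest of your argument goes through.
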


\section{A word property for reduced \texorpdfstring{$\Ss$}{}-expressions} \label{proof}

\subsection{Lemmas} \label{lemmas}

In order to establish \Th~\ref{MT}, it is crucial to understand when $w \in \J(\t)$ admits reduced $\Ss$-expressions that end with the longest possible alternating sequence of two given right descents.

\begin{Defn} \label{maximal}
Given $w \in \J(\t)$ and $s,s' \in D_R(w)$, $s \neq s'$, say that $w$ is \emph{$(s,s')$-maximal} if it has a reduced $\Ss$-expression of the form
\[
[\s_1 \cd \s_k\underbrace{\cd \s'\ssp}_{m(s,s')}] \in \Rh_\t(w).
\]
\end{Defn}

Since braid moves in a reduced $\Ss$-expression preserve the twisted involution, any $(s,s')$-maximal element is also $(s',s)$-maximal. A more general assertion is provided by Lemma~\ref{group} below.

The main goal of this subsection is to provide a characterisation of the $(s,s')$-maximal elements $w \in \J(\t)$; this is Lemma~\ref{ML} below. A closely related description, in terms of the minimal coset representative of $w$ in $W/W_{\{s,s'\}}$, can be gleaned from the proof of \cite[\Th~7.9]{H-M-P}.\footnote{Namely, with the notation of \cite{H-M-P}, said proof shows that the number $m(s,t,\t_{\m{a}})$ is the length of the longest sequence $s,t,s,\p$ which can be attached to $\m{a}$, if $s,t \notin D_R(\hat{\delta}_*(\m{a}))$, without destroying reducedness.} In particular, the following lemma is a consequence of that description. We provide a short independent argument.

\begin{Lemma} \label{group}
Suppose $w \in \J(\t)$ and $s,s' \in D_R(w)$. If
\[
\epsilon=[\s_1 \cd \s_k\underbrace{\cd \s'\ssp}_{\h{$\al$ letters}}] \in \Rh_\t(w)
\]
with $\al$ maximal, then
\[
\epsilon'=[\s_1 \cd \s_k\underbrace{\cd \ssp\s}_{\h{$\al$ letters}}] \in \Rh_\t(w).
\]
\end{Lemma}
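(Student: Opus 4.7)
Plan:

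Let $J = \{s, s'\}$ and $m = m(s, s')$. The starting observation is that since both $s, s' \in D_R(w)$, the standard parabolic factorisation of $w$ with respect to $W_J$ has the form $w = u \cdot w_0(J)$, where $u$ is the minimal coset representative of $wW_J$ in $W$ and $w_0(J)$ is the longest element of $W_J$. Indeed, if $w = u y$ with $u$ minimal and $y \in W_J$, then for $t \in J$ we have $t \in D_R(w) \iff t \in D_R(y)$; demanding this for both $s$ and $s'$ forces $y = w_0(J)$. Hence $\ell(w) = \ell(u) + m$, and $w$ admits reduced expressions in $W$ ending with a reduced expression for $w_0(J)$, i.e.\ an alternating word in $\{s, s'\}$ of length $m$.

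If $\alpha = m$, the lemma is immediate: the last $m$ letters of $\epsilon$ form the alternating $\Ss$-word $T_m^{s'}$, and a single braid move in $\Ss^*$ — known to preserve $\Rh_\t(w)$ by \cite{R-S1,H-M-P} — replaces this tail with $T_m^s$, producing $\epsilon' \in \Rh_\t(w)$.

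When $\alpha < m$, braid moves in $\Ss^*$ do not apply to the tail, and one cannot simply extend $\epsilon$ to length $k+m$ before braiding: since $s \in D_R(w)$, the naive extension $[\s_1 \cdots \s_k T_\alpha^{s'}\s]$ has rank strictly less than its length, so it is not reduced and a subsequent braid move at the $W$-level is not guaranteed to lift back. My plan is then to verify directly that $\epsilon' := [\s_1 \cdots \s_k T_\alpha^s]$ is reduced for $w$, by establishing $v \cdot T_\alpha^s = w$ (where $v = \s_1 \cdots \s_k$) and noting that $|\epsilon'| = k + \alpha = \rho(w)$. To this end I would apply the exchange property (Lemma~\ref{exchange}) to $\epsilon$ with the descent $s \in D_R(w)$, and symmetrically with $s'$, to produce alternative reduced $\Ss$-expressions for $w\s$ and $w\s'$; the maximality of $\alpha$ pins the exchange index into the tail, and the subword property (Lemma~\ref{subword2}) lets one read off the effect on $\epsilon$. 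An induction on $m-\alpha$, or on $\rho(w)$, should then close the argument.

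The main obstacle is exactly the case $\alpha < m$. There the desired identity $v \cdot T_\alpha^{s'} = v \cdot T_\alpha^s$ is \emph{not} a word-level identity in $\Ss^*$ — the two $\Ss$-words $T_\alpha^{s'}$ and $T_\alpha^s$ are genuinely inequivalent under braid moves when $\alpha < m$ — and must instead be derived from the specific structural features of $w$: simultaneous membership $s, s' \in D_R(w)$ combined with the maximality of $\alpha$. Making this precise, in particular pinpointing the exchange site in the tail without appealing to a braid identity, is the technical heart of the proof.
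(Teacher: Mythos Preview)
Your core idea---apply the exchange property (Lemma~\ref{exchange}) for the descent $s$ and use maximality of $\alpha$ to locate the deleted letter---is exactly the paper's approach. However, the paper executes it in a single stroke, with none of the additional apparatus you propose. Since $s\in D_R(w)$ and $w\s\,\s=w$, Lemma~\ref{exchange} yields a reduced $\Ss$-expression $\epsilon''$ for $w$ by deleting one letter from $\epsilon$ and appending $\s$. If the deleted letter lay in the prefix $[\s_1\cdots\s_k]$, the resulting alternating tail would have length $\alpha+1$, contradicting maximality; if it lay in the tail but were not its leftmost letter, two equal letters would become adjacent and $\epsilon''$ would fail to be reduced. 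Hence the deleted letter is the one immediately to the right of $\s_k$, and $\epsilon''=\epsilon'$ on the nose. The parabolic factorisation $w=u\,w_0(J)$, the separate case $\alpha=m$, the subword property, and any induction on $m-\alpha$ or $\rho(w)$ are all unnecessary: one exchange plus the two observations above already finish the proof.
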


\begin{proof}
By Lemma~\ref{exchange}, a reduced $\Ss$-expression $\epsilon''$ for $w=w\sss$ is obtained by deleting a letter from $\epsilon$ and appending $\s$ at the end. Maximality of $\al$ and rank considerations show that the deleted letter necessarily is the one immediately to the right of $\s_k$. Hence, $\epsilon''=\epsilon'$.
\end{proof}

\begin{Lemma} \label{reduced}
Let $\epsilon$ be a reduced $\Ss$-expression. Then $\ord(\epsilon)$ is also reduced. If, moreover,
\[
\epsilon=[\s_1 \cd \s_k\underbrace{\cd \s'\ssp}_{\h{$\al$ letters}}],
\]
then
\[
\ord(\epsilon)=[\underbrace{\cd \t(s')\t(s)\t(s') \cd}_{\h{$\be$ letters}}\ord(\s_1 \cd \s_k)\underbrace{\cd s'ss'}_{\h{$\al$ letters}}]
\]
for some $\al-2 \leq \be \leq \al$.
\end{Lemma}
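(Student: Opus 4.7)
Proof plan:

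I would split the proof into two main parts: establishing that $\ord(\epsilon)$ is reduced, and verifying the structural form with the bound $\al-2\leq\be\leq\al$.

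For reducedness, I proceed by induction on the length $k$ of $\epsilon$. Writing $\epsilon=[\epsilon'\s_k]$, the prefix $\epsilon'$ is also reduced (immediate from Lemma~\ref{mixed}), so by induction $\ord(\epsilon')$ is a reduced expression for $w':=\s_1\cd\s_{k-1}$. Since $\epsilon$ is reduced, $\rho(w'\s_k)=\rho(w')+1$, whence Lemma~\ref{mixed} gives $s_k\notin D_R(w')$, so $\ell(w's_k)=\ell(w')+1$. In the append-only case $\ord(\epsilon)=\ord(\epsilon')s_k$ this immediately gives reducedness. In the conjugating case $\ord(\epsilon)=\t(s_k)\ord(\epsilon')s_k$, I use the identity $D_L(w')=\t(D_R(w'))$ valid for $w'\in\J(\t)$ to obtain $\t(s_k)\notin D_L(w')$, hence $\ell(\t(s_k)w')=\ell(w')+1$; combined with the conjugate-case hypothesis $\t(s_k)w's_k\neq w'$ (via strong exchange, or equivalently invoking the length-rank identity $2\rho=\ell+\ell^\t$ from Hultman's work), one concludes $\ell(\t(s_k)w's_k)=\ell(w')+2$, so $\ord(\epsilon)$ is reduced.

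For the structural form, I track $\ord(\epsilon)$ across the $\al$ appending steps, starting from $\ord(\s_1\cd\s_k)$. Each step affixes its generator $t_i\in\{s,s'\}$ on the right in accordance with the alternating pattern, so the right portion of $\ord(\epsilon)$ is exactly $[\cd s'ss']$ of length $\al$. A conjugating step additionally prepends $\t(t_i)$ on the left, while an append-only step does not; hence $\be$ equals the number of conjugating steps among the $\al$, giving $\be\leq\al$ automatically. For the alternation of the prepended part, suppose two conjugating steps at positions $i<j$ are consecutive in the subsequence of conjugating indices and satisfy $t_i=t_j$. The prepended letters $\t(t_j)$ and $\t(t_i)$ would then end up adjacent in $\ord(\epsilon)$, producing a canceling subword $[\t(t_i)\t(t_i)]$ and contradicting reducedness from the first part. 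Since the $t_i$'s alternate by parity, consecutive conjugating positions must differ in parity, and the prepended part is alternating.

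The lower bound $\be\geq\al-2$ is the main obstacle: equivalently, at most two of the $\al$ appending steps are append-only. I plan to rule out three append-only steps by case analysis. Three consecutive append-only steps with generators $t,t',t$ yield $tt't=t'$ by unwinding the conditions $\t(t_i)v_{i-1}t_i=v_{i-1}$ across the three steps, forcing $m(t,t')\leq 2$; but the right tail is a reduced alternating word in $W_{\{s,s'\}}$ of length $\al$, so $\al\leq m(s,s')$, and combined with $\al\geq 3$ this is a contradiction. For non-consecutive configurations (patterns such as ACCA or ACACA inside the appending block), one performs an analogous but more delicate unwinding of the identities $\t(t_i)v_{i-1}t_i=v_{i-1}$ at the append-only steps, together with how the interspersed conjugating steps modify the element; this yields a relation in $W$ between $v_0$ and a word in $W_{\{s,s'\}}$ that cannot hold when $\ord(\epsilon)$ is reduced and $m(s,s')\geq\al$. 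This case-by-case elimination of the three-append-only configurations is the main technical work of the lemma.
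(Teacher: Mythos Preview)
Your argument for reducedness of $\ord(\epsilon)$ is correct, and in fact you already name the tool the paper uses: the identity $\ell=2\rho-\ell^\t$. The paper applies it directly---by Definition~\ref{absolute} the word $\ord(\epsilon)$ has exactly $2\rho(w)-\ell^\t(w)=\ell(w)$ letters, so it is reduced---rather than going through an inductive case split.

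The real divergence is in the lower bound $\be\geq\al-2$. Your three-consecutive argument is valid (combining $w_{i-1}t_i=\t(t_i)w_{i-1}$ with $w_{i+1}t_{i+2}=\t(t_{i+2})w_{i+1}$ and $t_{i+2}=t_i$ does yield $t_it_{i+1}t_i=t_{i+1}$), but the non-consecutive patterns are only asserted, and the ``analogous but more delicate unwinding'' is genuinely delicate: for instance the pattern $ACACA$ leads to $(t_it_{i+1})^4=e$ rather than $(t_it_{i+1})^2=e$, and one must then bound $\al$ against the resulting divisor of $m(s,s')$ in each configuration. This can be pushed through, but it is an open-ended case analysis, not a minor bookkeeping step.

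The paper bypasses all of this with a single observation: if a step is append-only, i.e.\ $\ell^\t(u\s)>\ell^\t(u)$, then $D_R(u\s)\supsetneq D_R(u)$. Indeed, $u\s=us=\t(s)u$, so for $r\in D_R(u)$ one has $\ell(usr)=\ell(\t(s)ur)\leq\ell(ur)+1=\ell(u)<\ell(us)$. Applied inside the alternating block, this says that an append-only step at position $i$ with $2\leq i\leq\al-1$ would force $t_{i-1}\in D_R(w_{i-1})\subset D_R(w_i)$; but $t_{i+1}=t_{i-1}$, contradicting reducedness of $\epsilon$. Hence only the first and last of the $\al$ steps can be append-only, which gives both $\be\geq\al-2$ and the alternation of the prepended letters at once, with no case analysis.
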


\begin{proof}
By \cite[\Th~4.8]{Hultman1}, $\ell=2\rho-\ell^\t$. The first claim thus follows from \Def~\ref{absolute}. For the second, observe that for any $u \in \J(\t)$ and $s \in S$, $\ell^\t(u\s)>\ell^\t(u)$ implies $D_R(u\s) \supset D_R(u)$. Since $\epsilon$ is reduced, this means that at most two of its rightmost $\al$ letters (namely, the first and the last) can contribute to $\ell^\t(\s_1 \cd \s_k \cd \s'\ssp)$. This yields the second claim.
\end{proof}

\begin{Lemma} \label{reversed}
If $ws=\t(s')w$ for some $w \in \J(\t)$ and $s,s' \in S$, then $ws'=\t(s)w$.
\end{Lemma}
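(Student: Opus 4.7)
The plan is to treat this as a purely group-theoretic identity, using only that $w \in \mathfrak{I}(\theta)$ (so $\theta(w) = w^{-1}$) and that $\theta$ is an involutive automorphism of $W$. The hypothesis $ws = \theta(s')w$ can be rewritten by solving for $s$ or by applying $\theta$ to both sides; the latter is cleaner. Applying $\theta$ gives $\theta(w)\theta(s) = \theta(\theta(s'))\theta(w) = s'\theta(w)$, where I have used $\theta^2 = \mathrm{id}$.

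Substituting $\theta(w) = w^{-1}$ turns this into $w^{-1}\theta(s) = s'w^{-1}$. Right-multiplying by $w$ yields $w^{-1}\theta(s)w = s'$, and then left-multiplying by $w$ gives $\theta(s)w = ws'$, which is the desired identity. I don't foresee a real obstacle here — the statement is symmetric in $(s,s')$ under the involution $\theta$ together with inversion, and the computation above just makes this symmetry explicit. In the actual write-up, I would present it in one short display, perhaps emphasising that no length, reducedness, or Coxeter-theoretic input is needed beyond the defining property of a twisted involution.
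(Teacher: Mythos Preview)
Your argument is correct and essentially the same as the paper's: both are one-line algebraic manipulations using only that $\theta$ is an involutive automorphism and $\theta(w)=w^{-1}$. The paper takes inverses first and then applies $\theta$, whereas you apply $\theta$ first and then substitute $\theta(w)=w^{-1}$, but these are just two orderings of the same computation.
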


\begin{proof}
We have $ws'=(s'\t(w))^{-1}=(\t(\t(s')w))^{-1}=(\t(ws))^{-1}=\t(s)w$.
\end{proof}

\begin{Lemma} \label{symm}
Let $w \in \J(\t)$ and $s,s' \in D_R(w)$, and suppose $w$ is not $(s,s')$-maximal. Then $ws=\t(s)w$ if and only if $ws'=\t(s')w$.
\end{Lemma}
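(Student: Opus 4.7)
The plan is to translate the two equalities into statements about how the final letters of specific reduced $\Ss$-expressions for $w$ act according to \Def~\ref{action}, and then to force these two ``actions'' to be of the same type.

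By Lemma~\ref{group}, I would pick reduced $\Ss$-expressions $\epsilon = [\s_1 \cd \s_k \cd \s'\ssp]$ and $\epsilon' = [\s_1 \cd \s_k \cd \ssp\s]$, both ending in a maximal alternating suffix of length $\al < m(s,s')$ and sharing the common prefix $[\s_1 \cd \s_k]$, representing a twisted involution $\tau$. Call a letter in an $\Ss$-expression \emph{case~1} if the first alternative of \Def~\ref{action} applies at that step, and \emph{case~2} otherwise. The basic translation is that $ws = \t(s) w$ holds iff the final $\s$ of $\epsilon'$ is case~1, and analogously $ws' = \t(s')w$ holds iff the final $\s'$ of $\epsilon$ is case~1.

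From the argument in the proof of Lemma~\ref{reduced}, within the last $\al$ letters of either expression, only the first and the last can be case~1; let $r \in \{0,1,2\}$ denote this count. Since $\ord(\epsilon)$ and $\ord(\epsilon')$ are both reduced words for $w$ of length $\ell(w)$, share the same middle $\ord([\s_1 \cd \s_k])$, and by Lemma~\ref{reduced} have alternating suffix of length $\al$ and alternating prefix of length $\al - r$, a length comparison forces the same $r$ for $\epsilon$ and $\epsilon'$. If $r = 0$ (resp.\ $r = 2$), then all final letters are case~2 (resp.\ case~1) in both expressions, so $ws = \t(s)w$ and $ws' = \t(s')w$ fail (resp.\ hold) together.

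The main obstacle is the case $r = 1$, where I must rule out the ``mix'' configuration in which case~1 sits at position $1$ in one of $\epsilon, \epsilon'$ and at position $\al$ in the other. Writing $w$ as an element of $W$ from each perspective, with all but one of the actions as case~2, yields
\[
w = \t(B)^{-1}\t(b)\,\tau\,B = \t(c)\t(B')^{-1}\,\tau\,B',
\]
where $B, B' \in W_{\{s,s'\}} \cong I_2(m(s,s'))$ are the two alternating elements of length $\al$ ending in $s'$ and $s$ respectively, and $b, c$ are their appropriate endpoint letters. A short calculation inside the dihedral group establishes the identity $B^{-1}b = c(B')^{-1}$; applying $\t$ and cancelling this common left factor collapses the two formulas for $w$ to $\tau B = \tau B'$, hence $B = B'$ in $W$. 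Since $1 \le \al < m(s,s')$, the elements $B$ and $B'$ are distinct in $I_2(m(s,s'))$, yielding the desired contradiction. Thus the case~1 positions in $\epsilon$ and $\epsilon'$ must coincide, and the biconditional follows.
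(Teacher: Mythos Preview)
Your argument is correct and follows the same overall route as the paper: pick $\epsilon$ and $\epsilon'$ via Lemma~\ref{group}, use Lemma~\ref{reduced} to control the shape of $\ord(\epsilon)$ and $\ord(\epsilon')$, translate each equality into a statement about the last letter being ``case~1'', and compare. The paper streamlines your case analysis on $r$ into a single observation: $\ord(\epsilon)$ and $\ord(\epsilon')$ are reduced words for the same $w$, they share the middle block $\ord([\s_1\cdots\s_k])$, and their alternating prefixes in $\t(s),\t(s')$ have the same length $\be$; meanwhile their length-$\al$ alternating suffixes are \emph{distinct} elements of $W_{\{s,s'\}}$ because $\al<m(s,s')$. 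Hence the two prefixes must represent different elements, and since both are alternating words of the same length, they must begin with different letters. Combined with the translation $ws=\t(s)w \Leftrightarrow \ord(\epsilon')$ does not begin with $\t(s)$ (and the analogue for $s'$ and $\epsilon$), this immediately rules out the possibility that exactly one of the two equalities holds. Your $r=1$ dihedral computation, once unpacked, is precisely this comparison of suffixes ($\tau B=\tau B'\Rightarrow B=B'$); note also that you explicitly handle only one of the two ``mix'' configurations, the other following by the evident symmetry in $s,s'$.
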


\begin{proof}
By Lemmas~\ref{group} and \ref{reduced}, $w$ has reduced $\Ss$-expressions
\[
\epsilon=[\s_1 \cd \s_k\underbrace{\cd \s'\ssp}_\al] \quad \h{and} \quad \epsilon'=[\s_1 \cd \s_k\underbrace{\cd \ssp\s}_\al]
\]
with corresponding reduced words
\[
\ord(\epsilon)=[\underbrace{\cd \t(s')\t(s)\t(s') \cd}_\be\ord(\s_1 \cd \s_k)\underbrace{\cd s'ss'}_\al]
\]
and
\[
\ord(\epsilon')=[\underbrace{\cd \t(s)\t(s')\t(s) \cd}_\be\ord(\s_1 \cd \s_k)\underbrace{\cd ss's}_\al]
\]
for some $\be \leq \al$. Now, $ws=\t(s)w \ToT w\s=ws \ToT w=(w\s)s$, which is the case if and only if $\ord(\epsilon')$ does not begin with $\t(s)$. Similarly, $ws'=\t(s')w$ precisely when $\ord(\epsilon)$ does not begin with $\t(s')$. Since $\al<m(s,s')$, and $\ord(\epsilon)$ and $\ord(\epsilon')$ represent the same element $w$, they begin with different letters. Hence, it cannot be that exactly one of $ws=\t(s)w$ and $ws'=\t(s')w$ holds.
\end{proof}

The promised characterisation of $(s,s')$-maximal elements can now be delivered. It is the main technical ingredient in the proofs of Lemmas~\ref{parabolic} and \ref{disconnected}, which are needed in order to establish \Th~\ref{MT}.

\begin{Lemma} \label{ML}
Given $w \in \J(\t)$, let $s,s' \in D_R(w)$ with $s \neq s'$. Then $w$ is $(s,s')$-maximal if and only if either
\begin{itemize}
  \item $m(s,s')=2$ and $ws \neq \t(s')w$
\end{itemize}
or
\begin{itemize}
  \item $m(s,s') \geq 3$ and $\{ws,ws'\} \neq \{\t(s)w,\t(s')w\}$.
\end{itemize}
\end{Lemma}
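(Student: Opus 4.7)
The plan is to prove the biconditional by contrapositive in the $(\Leftarrow)$ direction, invoking Lemma~\ref{symm}, and by direct contradiction in the $(\Rightarrow)$ direction. Both cases $m(s,s')=2$ and $m(s,s')\ge 3$ are treated in a unified framework; the $m=2$ case turns out to be a specialization in which one of the two branches of Lemma~\ref{symm} is vacuous.

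For the $(\Leftarrow)$ contrapositive, assume $w$ is not $(s,s')$-maximal, let $\al<m(s,s')$ be the maximal alternating suffix length, and pick $\epsilon,\epsilon'\in\Rh_\t(w)$ as supplied by Lemmas~\ref{group} and \ref{reduced}: they differ only in their $\al$-letter suffixes, and the corresponding reduced $S$-words each have an alternating prefix of common length $\be\le\al$, a shared middle $\ord(\s_1\cdots\s_k)$, and an alternating suffix of length $\al$. Lemma~\ref{symm} gives $ws=\t(s)w\iff ws'=\t(s')w$. In the ``both hold'' branch, $\{ws,ws'\}=\{\t(s)w,\t(s')w\}$ directly. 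In the ``neither'' branch, the argument inside the proof of Lemma~\ref{symm} shows $\be\ge 1$, so $\ord(\epsilon)$ begins with $\t(s')$ and $\ord(\epsilon')$ begins with $\t(s)$; identifying the two resulting reduced words for $w$ then yields $us=\t(s')u$ in the group (where $u=\s_1\cdots\s_k$), whence $ws=\t(s')w$, and by Lemma~\ref{reversed} also $ws'=\t(s)w$, so again $\{ws,ws'\}=\{\t(s)w,\t(s')w\}$. For $m=2$ with $\al=1$, the ``both hold'' branch is excluded: it would force $\be=0$ for both $\epsilon$ and $\epsilon'$, hence $w=us=us'$ and $s=s'$, a contradiction. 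Thus only the ``neither'' branch can occur, delivering $ws=\t(s')w$ as required.

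For the $(\Rightarrow)$ direction, assume $w$ is $(s,s')$-maximal, witnessed by some $\epsilon$ ending in $m=m(s,s')$ alternating letters. By Lemma~\ref{reduced}, $\ord(\epsilon)$ ends in the full braid $\underbrace{\cdots s'ss'}_m$, a reduced word for $w_0(\{s,s'\})$. Suppose toward contradiction that the relevant condition for non-maximality held: for $m=2$, that $ws=\t(s')w$, and for $m\ge 3$, that $\{ws,ws'\}=\{\t(s)w,\t(s')w\}$. In the first subcase and in half of the second, $ws=\t(s')w$, so $\t(s')\in D_L(w)$; combined with the $m$-length suffix and Lemma~\ref{reversed}, a parabolic decomposition on the left forces $w$ to admit a reduced $\Ss$-expression with fewer than $m$ alternating letters at the end---contradicting maximality. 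The remaining subcase ``$ws=\t(s)w$ and $ws'=\t(s')w$'' of the $m\ge 3$ condition is ruled out by a parallel argument using $\epsilon$ together with its companion $\epsilon'$ from Lemma~\ref{group}.

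The main obstacle is the identification step which occurs in both directions: in $(\Leftarrow)$, extracting $us=\t(s')u$ from the equality $\ord(\epsilon)=\ord(\epsilon')$ in $W$; in $(\Rightarrow)$, converting the hypothesized identity $ws=\t(s')w$ into an $\Ss$-expression for $w$ with strictly fewer than $m$ alternating letters at the end. Both require careful bookkeeping of the parity of $\be$ and of the precise interaction between the $\Ss$-action and left multiplication by $\t(s)$ or $\t(s')$.
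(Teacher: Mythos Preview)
Your plan has genuine gaps in both directions, and the acknowledged ``main obstacle'' is precisely where the real work lies.

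\textbf{The $(\Leftarrow)$ direction, ``neither'' branch.} You claim that from the equality $\ord(\epsilon)=\ord(\epsilon')$ in $W$ (two reduced words sharing the middle segment $\ord(\s_1\cdots\s_k)$, with alternating prefixes of length $\be$ starting $\t(s')\cdots$ and $\t(s)\cdots$, and alternating suffixes of length $\al$) one can ``identify'' and extract $us=\t(s')u$. This does not follow. Two reduced words for the same group element that happen to share a middle factor need not yield any relation of the form $us=\t(s')u$; the word property only gives a chain of braid moves between them, and those moves may well cross into the middle segment. The paper's proof of this direction (its Part~1) does \emph{not} proceed by identification. Instead it assumes $\t(s)w\notin\{ws,ws'\}$ and $\t(s')w\notin\{ws,ws'\}$, and then repeatedly applies the exchange property to $ws$, $ws'$, $ws$, \dots, each time showing that the deleted letter must be the one immediately left of $\ord(\s_1\cdots\s_k)$, thereby transferring one letter at a time from the left alternating block to the right one. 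After enough steps the right block has length $m(s,s')$; one more application of the exchange property (now on the left, using $\t(s)\in D_L(w)$) then forces $\t(s)w\in\{ws,ws'\}$, the desired contradiction. This iterative exchange is the missing idea.

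\textbf{The $(\Rightarrow)$ direction.} Your contradiction is misstated: producing a reduced $\Ss$-expression for $w$ with \emph{fewer} than $m$ alternating letters at the end does not contradict $(s,s')$-maximality, which only asserts the \emph{existence} of one with $m$. The paper's argument goes the other way. If $ws=\t(s')w$, then $\ord(\epsilon)$ begins with $\t(s')$, and the identity $w=\t(s')ws$ lets you cancel that leading $\t(s')$ and append an $s$ on the right, producing a reduced word for $w$ with $\al+1$ alternating letters at the end; reducedness then forces $\al+1\le m(s,s')$, i.e., $\al<m(s,s')$. The same trick handles the case $ws=\t(s)w$, $ws'=\t(s')w$ when $\be\ge 1$; the residual case $\be=0$ forces $\al\le 2$ by Lemma~\ref{reduced} and hence $m(s,s')=2$. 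No parabolic decomposition is needed.

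In short, the architecture of splitting via Lemma~\ref{symm} is reasonable, but the two ``identification'' steps you flag as obstacles are exactly the substance of the lemma, and the paper resolves them by iterated exchange (for $(\Leftarrow)$) and by the one-letter shift $w=\t(s')ws$ (for $(\Rightarrow)$), not by the mechanisms you propose.
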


\begin{proof}
Choose a reduced $\Ss$-expression for $w$,
\[
\epsilon=[\s_1 \cd \s_k\underbrace{\cd \s'\ssp}_\al] \in \Rh_\t(w),
\]
with $\al$ maximal.

The proof is divided into three parts. In the first part, we show that if $\{ws,ws'\} \neq \{\t(s)w,\t(s')w\}$, then $\al=m(s,s')$. Then, in the second part, we show that if $ws=\t(s')w$ and $ws'=\t(s)w$, then $\al<m(s,s')$. Finally, we show that if $ws=\t(s)w$ and $ws'=\t(s')w$, then $\al=m(s,s')$ if $m(s,s')=2$, and $\al<m(s,s')$ if $m(s,s') \geq 3$.

First, assume $\{ws,ws'\} \neq \{\t(s)w,\t(s')w\}$. In order to obtain a contradiction, suppose $\al<m(s,s')$. Then, by Lemmas~\ref{reversed} and \ref{symm}, $\t(s)w \notin \{ws,ws'\}$ and $\t(s')w \notin \{ws,ws'\}$. From Lemma~\ref{reduced}, we know that $\ord(\epsilon)$ is reduced, and since $ws' \neq \t(s')w$, we have
\[
\ord(\epsilon)=[\underbrace{\t(s')\t(s)\t(s') \cd}_\be\ord(\s_1 \cd \s_k)\underbrace{\cd s'ss'}_\al]
\]
for some $1 \leq \be \leq \al$.

Consider the element $ws$. By the exchange property, a reduced word for $ws$ is obtained by deleting a letter in $\ord(\epsilon)$. Since $\al<m(s,s')$, $s$ is not a right descent of $\s_1 \cd \s_k \cd s'ss'$. Hence the deleted letter is one of the leftmost $\be$ generators. It is not the very first one, because $ws \neq \t(s')w$. It cannot be any of the other ones not adjacent to $\ord(\s_1 \cd \s_k)$ since $\ell(ws)=\ell(w)-1$. Hence it is the generator immediately to the left of $\ord(\s_1 \cd \s_k)$. Thus,
\[
w=(ws)s=\underbrace{\t(s')\t(s)\t(s') \cd}_{\be-1}\ord(\s_1 \cd \s_k)\underbrace{\cd s'ss's}_{\al+1}.
\]
Now considering $ws'$, and continuing in this way, we finally obtain
\[
w=\underbrace{\t(s')\t(s)\t(s') \cd}_{\be+\al-m(s,s')}\ord(\s_1 \cd \s_k)\underbrace{\cd s'ss's \cd}_{m(s,s')}.
\]
Observe that $\t(s) \in D_L(w)$ and $\be+\al-m(s,s')<m(s,s')$. By the exchange property and similar reasoning as above, this however means that
\[
\t(s)w=\underbrace{\t(s')\t(s)\t(s') \cd}_{\be+\al-m(s,s')}\ord(\s_1 \cd \s_k)\underbrace{\cd s'ss's \cd}_{m(s,s')-1},
\]
which is equal to either $ws$ or $ws'$, a contradiction. Hence, $\al=m(s,s')$.

Next, assume $ws=\t(s')w$ and $ws'=\t(s)w$. We have
\[
\ord(\epsilon)=[\underbrace{\t(s')\t(s)\t(s') \cd}_\be\ord(\s_1 \cd \s_k)\underbrace{\cd s'ss'}_\al]
\]
with $1 \leq \be \leq \al$. Thus,
\[
w=\t(s')ws=\underbrace{\t(s)\t(s') \cd}_{\be-1}\s_1 \cd \s_k\underbrace{\cd s'ss's}_{\al+1}.
\]
Since $\ord(\epsilon)$ is reduced, $\al<m(s,s')$.

Finally, suppose $ws=\t(s)w$ and $ws'=\t(s')w$. This implies that
\[
\ord(\epsilon)=[\underbrace{\t(s)\t(s')\t(s) \cd}_\be\ord(\s_1 \cd \s_k)\underbrace{\cd s'ss'}_\al]
\]
for some $0 \leq \be<\al$. In case $\be \geq 1$, we obtain
\[
w=\t(s)ws=\underbrace{\t(s')\t(s) \cd}_{\be-1}\s_1 \cd \s_k\underbrace{\cd s'ss's}_{\al+1},
\]
which exactly as above leads to $\al<m(s,s')$, and hence, $m(s,s') \geq 3$. If $\be=0$, then $w=\s_1 \cd \s_k \cd s'ss'=\s_1 \cd \s_k \cd ss's$, whence $m(s,s')=\al$. By Lemma~\ref{reduced}, $\al \leq 2$, so $m(s,s')=\al=2$.
\end{proof}

It is convenient to encode the information conveyed by Lemma~\ref{ML} in a graph. To this end, let $G(w,\t)$ be the graph on vertex set $D_R(w)$ in which $\{s,s'\}$ is an edge if and only if $w$ is $(s,s')$-maximal.

\begin{Exam} \label{exam}
Figure~\ref{graph} displays $G(w,\t)$ when $w$ is the longest element in $S_4$. In this case, $D_R(w)=\{s_1,s_2,s_3\}$ with $s_i=(i,i+1)$. We have $ws_1=s_3w$, $ws_2=s_2w$, and $ws_3=s_1w$. Let us use Lemma~\ref{ML} to determine the edge set. Consider first $\t=\id$. Then $ws_1=\t(s_3)w$, so $w$ is not $(s_1,s_3)$-maximal. On the other hand, $ws_1 \notin \{\t(s_1)w,\t(s_2)w\}$ and $ws_3 \notin \{\t(s_2)w,\t(s_3)w\}$, whence $w$ is $(s_1,s_2)$- and $(s_2,s_3)$-maximal. Now assume $\t \neq \id$. Then $ws_1 \neq \t(s_3)w$, so $w$ is $(s_1,s_3)$-maximal. However, $ws_1=\t(s_1)w$ and $ws_2=\t(s_2)w$, whence $w$ is not $(s_1,s_2)$-maximal. Similarly, $w$ is not $(s_2,s_3)$-maximal either.
\end{Exam}

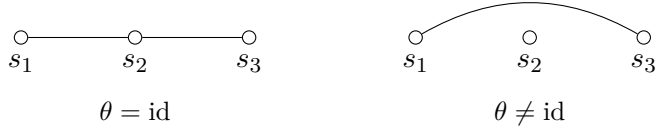
\begin{figure}[tb]
\begin{subfigure}[b]{0.4\textwidth}
\centering
\begin{tikzpicture}[scale=1.5]
  \vertex (0) at (0,0) [label=below:$s_1$] {}; \vertex (1) at (1,0) [label=below:$s_2$] {}; \vertex (2) at (2,0) [label=below:$s_3$] {};
  \draw (0)--(1)--(2);
\end{tikzpicture}
\caption*{$\t=\id$}
\bigskip
\end{subfigure}
\begin{subfigure}[b]{0.4\textwidth}
\centering
\begin{tikzpicture}[scale=1.5]
  \vertex (0) at (0,0) [label=below:$s_1$] {}; \vertex (1) at (1,0) [label=below:$s_2$] {}; \vertex (2) at (2,0) [label=below:$s_3$] {};
  \draw (0) to [out=30,in=150] (2);
\end{tikzpicture}
\caption*{$\t \neq \id$}
\bigskip
\end{subfigure}
\caption{The graph $G(w,\t)$, where $w$ is the longest element in $S_4$. Observe that $G(w,\t)$ is connected when $\t=\id$, but disconnected when $\t \neq \id$.} \label{graph}
\end{figure}

\subsection{Proof of the main result} \label{bevis}

Before commencing to prove \Th~\ref{MT}, let us describe the basic strategy. Starting with a list comprised of all braid moves, we work by induction on the rank of a given twisted involution $w$. If the moves that are so far collected do not suffice to connect all the reduced $\Ss$-expressions for $w$, sufficient new ones are added to the list. As we shall see, new moves must be added precisely when $G(w,\t)$ is disconnected. Moreover, this can only happen when $w$ is the longest element of a finite, $\t$-stable parabolic subgroup $W_J$ (see Lemma~\ref{parabolic}). Using the classification of finite Coxeter groups, we obtain the list of such $W_J$ (see Lemma~\ref{disconnected}).

We now turn to the actual proof. First, the connectedness of $G(w,\t)$ is investigated.

\begin{Lemma} \label{parabolic}
Let $w \in \J(\t)$. If $G(w,\t)$ is disconnected, then $w$ is the longest element $w_0(J)$ of a finite, $\t$-stable parabolic subgroup $W_J$, $J \seq S$.
\end{Lemma}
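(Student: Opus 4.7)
Set $J := D_R(w)$. The strategy is to prove $w = w_0(J)$; the finiteness of $W_J$ (which then has a longest element) and the $\t$-stability of $J$ will both follow without further work. For the latter, applying $\t$ to $w = w_0(J)$ gives $w^{-1} = \t(w) = w_0(\t(J))$, and since $w_0(J)$ is its own inverse, $w_0(J) = w_0(\t(J))$, forcing $J = \t(J)$ as a finite parabolic subgroup is determined by its longest element. Moreover, since $D_R(w) = J$ holds by definition, it suffices to show $w \in W_J$: an element of $W_J$ whose right-descent set is all of $J$ is necessarily the longest element of $W_J$.

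We are thus reduced to showing that if $G(w,\t)$ is disconnected, then $w \in W_J$, equivalently that every generator appearing in a reduced word for $w$ lies in $J$. Take any reduced $\Ss$-expression $\epsilon = [\s_1 \cd \s_k]$ for $w$. Lemma~\ref{mixed} gives $s_k \in J$. Suppose for contradiction that $s_i \notin J$ for some $i$, and choose $i$ maximal, so that $s_{i+1}, \p, s_k \in J$. Fix a disconnection $D_R(w) = A \sqcup B$. By Lemma~\ref{ML}, every cross-pair $(s, s')$ with $s \in A$, $s' \in B$ yields a relation of the form $ws = \t(s')w$ (when $m(s,s')=2$) or $\{ws, ws'\} = \{\t(s)w, \t(s')w\}$ (when $m(s,s') \geq 3$), describing precisely how $w$ interleaves the two parts. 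Combining these relations with Lemma~\ref{group} and iterated braid moves, the aim is to transform $\epsilon$ into a reduced $\Ss$-expression in which $\s_i$ has been moved past $\s_{i+1}, \p, \s_k$ to the final position; this would force $s_i \in D_R(w) = J$, contradicting $s_i \notin J$.

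The easy subcase is $m(s_i, s_j) = 2$ for all $j > i$: then $\s_i$ commutes with each subsequent letter via an ordinary braid move and the contradiction is immediate. The real difficulty lies in the case $m(s_i, s_j) \geq 3$ for some $j > i$, where dragging $\s_i$ rightward demands a non-trivial rewrite rather than a bare swap. Here the plan is to exploit the \emph{full} disconnection hypothesis---not just a single missing edge but a partition $A \sqcup B$ of $D_R(w)$ supplying simultaneous relations between its two parts via Lemma~\ref{ML}---to thread $\s_i$ through the remaining block of letters. A fallback is induction on $\rho(w)$: pass to $w\s$ of smaller rank for some $s \in J$, verify that a suitable version of the disconnection survives, and apply the inductive hypothesis. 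The delicate point in any approach is that the hypothesis bears on $D_R(w)$ only, whereas the conclusion concerns the support of $w$, which could a priori be strictly larger than $D_R(w)$.
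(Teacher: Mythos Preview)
Your proof has a genuine gap: the ``hard case'' where $m(s_i,s_j)\ge 3$ for some $j>i$ is never carried out. You sketch two possible plans (threading $\s_i$ rightward using the disconnection relations, or induction on rank), but neither is executed, and in fact neither is easy to make work as stated. The relations you extract from Lemma~\ref{ML} describe how $w$ conjugates elements of $J=D_R(w)$ into $\t(J)$; they say nothing about the offending letter $s_i\notin J$, so they do not directly help you move $\s_i$ past its neighbours.

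The key observation you are missing is stronger than the cross-pair relations you wrote down. If some $s\in D_R(w)$ satisfies $ws\notin Sw$ (i.e.\ $ws\neq \tilde{s}w$ for every $\tilde{s}\in S$), then by Lemma~\ref{ML} the vertex $s$ is adjacent to \emph{every} other vertex of $G(w,\t)$, forcing $G(w,\t)$ to be connected. Hence disconnectedness implies that \emph{every} right descent $s$ of $w$ satisfies $ws=\tilde{s}w$ for some $\tilde{s}\in S$. This is what the paper calls ``$s$ passes through $w$''. With this in hand, the paper works with an ordinary reduced word $[s_1\cdots s_k]$ for $w$ (not an $\Ss$-expression) and simply rotates: $w=s_1\cdots s_k=\tilde{s}_k s_1\cdots s_{k-1}=\tilde{s}_{k-1}\tilde{s}_k s_1\cdots s_{k-2}=\cdots=\tilde{s}_2\cdots\tilde{s}_k s_1$. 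Each rotation exhibits a new $s_i$ as the last letter of a reduced word for $w$, so $S(w)\subseteq D_R(w)$, whence $S(w)=D_R(w)$ and $w=w_0(S(w))$. No contradiction argument, no induction, no case split on $m(s_i,s_j)$ is needed.

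Your reductions at the start (that it suffices to show $w\in W_J$ with $J=D_R(w)$, and your derivation of $\t$-stability) are fine, though the paper's one-line argument that $S(w)$ is $\t$-stable for any $w\in\J(\t)$ is slightly quicker.
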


\begin{proof}
Say that $s \in S$ \emph{passes through $w$} if $ws=\tilde{s}w$ for some $\tilde{s} \in S$. It is an immediate consequence of Lemma~\ref{ML} that $s \in D_R(w)$ is adjacent to all other vertices of $G(w,\t)$ if $s$ does not pass through $w$. Hence, if $G(w,\t)$ is disconnected, then every right descent of $w$ passes through $w$. We claim that in this case, $w$ is the longest element of the (hence finite) parabolic subgroup generated by $S(w)$, where $S(w)$ is the set of generators that appear in some reduced word for $w$; by the word property, it is independent of the choice of reduced expression. Indeed, if $[s_1 \cd s_k]$ is a reduced word for $w$, we have
\[
w=s_1 \cd s_k=\tilde{s}_ks_1 \cd s_{k-1}=\tilde{s}_{k-1}\tilde{s}_ks_1 \cd s_{k-2}=\cd=\tilde{s}_2\tilde{s}_3 \cd \tilde{s}_ks_1,
\]
implying that $S(w)=D_R(w)$, a property which is equivalent to $w=w_0(S(w))$, see \cite[Lemma 7.11]{Davis}.

Clearly, $S(w)$ is $\t$-stable for any $w \in \J(\t)$. Therefore, $G(w,\t)$ is always connected, unless, possibly, $w=w_0(J)$ for some $\t$-stable $J \seq S$.
\end{proof}

\begin{Lemma} \label{disconnected}
If $W$ is finite with longest element $w_0$, then $G(w_0,\t)$ is disconnected in exactly the following cases:
\begin{itemize}
  \item $W$ of type~$A_3$ with $\t \neq \id$;
  \item $W$ of type~$B_3$;
  \item $W$ of type~$D_4$ with $\t=\id$;
  \item $W$ of type~$H_3$;
  \item $W$ of type~$I_2(m)$, $3 \leq m<\1$, with $\t=\id$.
  \item $W$ of type~$I_2(m)$, $2 \leq m<\1$, with $\t \neq \id$.
\end{itemize}
Moreover, when it is disconnected, it has exactly two connected components.
\end{Lemma}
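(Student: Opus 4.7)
The plan is to translate the edge condition of $G(w_0,\t)$ into a condition on a single permutation of $S$ and then to verify the claim by classification. Since $w_0$ is an involution in $W$, conjugation by $w_0$ defines an involutive Coxeter system automorphism $\sigma$ of $(W,S)$ with $w_0 s = \sigma(s) w_0$ for every $s \in S$. Setting $\tau := \t \circ \sigma$ and noting that in every finite irreducible Coxeter system the group of Coxeter graph automorphisms is abelian except for $D_4$ (where $w_0$ is central and hence $\sigma = \id$), it follows that $\tau$ is always an involution. Since $D_R(w_0) = S$, combining Lemma~\ref{ML} with $w_0 s = \sigma(s) w_0$ yields the following edge rule for $G(w_0,\t)$: the pair $\{s,s'\}$ with $s \neq s'$ is a \emph{non-edge} if and only if either $m(s,s') = 2$ and $\tau(s) = s'$, or $m(s,s') \geq 3$ and $\tau$ fixes the set $\{s,s'\}$.

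With this rule in hand, I would first dispose of reducible $W$. If $W$ has an irreducible factor $W_1$ that $\t$ does not swap with another factor, then $\tau$ preserves $S_1$, so every cross-pair $\{s,s'\}$ with $s \in S_1$, $s' \notin S_1$ is automatically an edge. Within a pair of isomorphic factors $W_1, W_2$ that $\t$ swaps, all within-component pairs are edges (because $\tau(S_i) \cap S_i = \emptyset$), while cross-edges form $K_{n,n}$ minus a perfect matching with $n = |S_i|$; so that piece of $G(w_0,\t)$ is $K_{2n}$ with a perfect matching deleted, which is connected precisely when $n \geq 2$. Combining these observations, the only reducible case yielding a disconnected graph is $W = A_1 \times A_1 = I_2(2)$ with $\t \neq \id$.

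The remaining work is a classification-driven case check over finite irreducible Coxeter systems. I would use the standard fact that $\sigma$ is non-trivial precisely in types $A_n$ ($n \geq 2$), $D_n$ (odd $n$), $E_6$, and $I_2(m)$ (odd $m$), where it coincides with the unique non-trivial diagram involution. For each type and each involutive $\t$, I would compute $\tau = \t \sigma$ and apply the edge rule above. This is the main obstacle of the proof — it is routine but requires patience, with $D_4$ needing a little extra care because its diagram automorphism group $S_3$ admits three non-trivial involutions in addition to $\id$ (though $\sigma = \id$ simplifies the computation to $\tau = \t$). The verification shows that $G(w_0,\t)$ is connected in every irreducible finite type outside the stated list, and in each listed disconnected case it has exactly two components by direct inspection: an isolated vertex together with a single edge in $A_3$ ($\t \neq \id$), $B_3$, and $H_3$; an isolated vertex together with a triangle in $D_4$ ($\t = \id$); and two isolated vertices in every $I_2(m)$ case.
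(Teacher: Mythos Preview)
Your approach is essentially the paper's: both translate the edge condition of Lemma~\ref{ML} through the conjugation automorphism $\sigma=\varphi$ (your $\tau=\theta\sigma$ is just the paper's comparison of $\varphi$ with $\theta$ repackaged), handle the reducible case separately, and then appeal to the classification of finite irreducible Coxeter systems together with the known description of $\sigma$. Two small remarks: first, your justification that $\tau$ is an involution is more elaborate than necessary---since $\theta(w_0)=w_0$, one has $\theta\sigma=\sigma\theta$ directly, so $\tau^2=\id$ in all cases without invoking abelianness of the diagram automorphism group; second, the paper streamlines your ``routine but patient'' case check by observing that when $\varphi=\theta$ (equivalently $\tau=\id$) the graph $G(w_0,\theta)$ is exactly the complement of the Coxeter graph, and when $\varphi\neq\theta$ a short argument shows disconnectedness forces $|S|=2$, so no type-by-type verification is needed.
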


\begin{proof}
Let $\f$ denote the involutive automorphism $x \mapsto w_0xw_0$. It is convenient to reformulate the conditions stated in Lemma~\ref{ML} in terms of $\f$. Namely, notice that $w_0s \neq \t(s')w_0$ is equivalent to $\f(s) \neq \t(s')$ and that $\{w_0s,w_0s'\} \neq \{\t(s)w_0,\t(s')w_0\}$ if and only if $\{ \f(s),\f(s')\} \neq \{\t(s),\t(s')\}$.

Now, $\f$ preserves irreducible group components. Hence, by Lemma~\ref{ML}, $s$ is non-adjacent to at most one vertex in a different component, $s'=\t(\f(s))$ being the only candidate. If $s'$ is indeed in a different component, then $\t$ interchanges the component containing $s$ with that which contains $s'$. In this case, $s$ is adjacent to every vertex in its irreducible group component. It follows that the only reducible Coxeter system with disconnected $G(w_0,\t)$ is $I_2(2)$ with $\t$ interchanging the two generators.

From now on, let us restrict attention to finite, irreducible Coxeter systems. It is known, see, e.g., \cite[Sections~3.19 and 6.3]{Humphreys}, that $\f$ is not the identity involution if and only if the system has an even exponent, i.e., if and only if it is of type~$A_n$ ($n \geq 2$), $D_{2m+1}$, $E_6$, or $I_2(2m+1)$, for integral $m$.

First, suppose $\f=\t$. Then Lemma~\ref{ML} shows that $G(w_0,\t)$ is the complement of the Coxeter graph. That is, $s$ and $s'$ are connected if and only if they commute. The finite, irreducible Coxeter systems with disconnected complement of the Coxeter graph are those of rank $2$ and $3$ and that of type~$D_4$. With the requirement $\f=\t$ they comprise the following list: $A_3$ ($\t \neq \id$), $B_3$, $D_4$ ($\t=\id$), $H_3$, $I_2(2m)$ ($\t=\id$), and $I_2(2m+1)$ ($\t \neq \id$). In all cases, the complement of the Coxeter graph has two components.

Second, assume $\f \neq \t$. Only type~$D_4$ admits distinct non-trivial, involutive automorphisms; in this type, however, $\f$ is trivial. Thus, exactly one of $\f$ and $\t$ must be the identity involution. Let $\psi \in \{\f,\t\}$ denote the non-trivial involution. By Lemma~\ref{ML}, $s$ and $s'$ are adjacent in $G(w_0,\t)$ unless either $\psi(s)=s'$, or $s$ and $s'$ are both fixed by $\psi$ and $m(s,s') \geq 3$. It follows that $G(w_0,\t)$ is disconnected if and only if $|S|=2$. This accounts for the remaining dihedral cases $I_2(2m)$ ($\t \neq \id$) and $I_2(2m+1)$ ($\t=\id$), and concludes the proof.
\end{proof}

Recall that an initial move is the replacement of one element in $\Rh_\t(v)$ (for some $v \in \J(\t)$) by another, in the beginning of a reduced $\Ss$-expression for some $w \in \J(\t)$. Let a \emph{list initial move} be an initial move in which $v$ is the longest element of a $\t$-stable parabolic subgroup of one of the types listed in Lemma~\ref{disconnected}.

Having established all the necessary preliminaries, we are now in position to prove the main result.

\begin{proof}[Proof of Theorem~\ref{MT}]
Fix $w \in \J(\t)$. The result is trivially true if $w$ is the identity element. In order to induct on the rank, assume the result holds for all twisted involutions of rank less than $k=\rho(w)$. Consider first two reduced $\Ss$-expressions for $w$, $\epsilon$ and $\epsilon'$, that end with the same letter. By the induction hypothesis, they are related by a sequence of braid moves and list initial moves of rank at most $k-1$ that never interfere with the last letter; let $\epsilon \move{ind.} \epsilon'$ indicate this property.

If $s$ and $s'$ are connected by an edge in $G(w,\t)$, then there are two reduced $\Ss$-expressions $\epsilon$ and $\epsilon'$ for $w$ which are related by a braid move and end with $\s$ and $\s'$, respectively; let $\epsilon \move{br.} \epsilon'$ indicate this relationship.

Now, choose two arbitrary reduced $\Ss$-expressions for $w$, $\epsilon=[\s_1 \cd \s_k]$ and $\epsilon'=[\s_1' \cd \s_k']$. If there is a path $s_k=z_0 \to z_1 \to \cd \to z_t=s_k'$ in $G(w,\t)$, we have reduced $\Ss$-expressions for $w$ related in the following way:
\begin{align*}
\epsilon &\move{ind.} [u_0\underbrace{\cd \z_1\z_0}_{m(z_0,z_1)}] \move{br.} [u_0\underbrace{\cd \z_0\z_1}_{m(z_0,z_1)}] \move{ind.} [u_1\underbrace{\cd \z_2\z_1}_{m(z_1,z_2)}] \move{br.} [u_1\underbrace{\cd \z_1\z_2}_{m(z_1,z_2)}] \move{ind.} \cd \\
         &\move{ind.} [u_{t-1}\underbrace{\cd \z_t\z_{t-1}}_{m(z_{t-1},z_t)}] \move{br.} [u_{t-1}\underbrace{\cd \z_{t-1}\z_t}_{m(z_{t-1},z_t)}] \move{ind.} \epsilon',
\end{align*}
where the $u_i$ are reduced $\Ss$-expressions. Hence, $\epsilon$ and $\epsilon'$ are related by a sequence of braid moves, and list initial moves of rank at most $k-1$. On the other hand, if there is no such path connecting $s_k$ and $s_k'$, then it follows from Lemmas~\ref{parabolic} and \ref{disconnected} that $\epsilon$ and $\epsilon'$ are related by a list initial move of length $k$.
\end{proof}

\section{Necessary list initial moves} \label{disc}

In a reduced $\Ss$-expression, any operation that trades a prefix representing $w_0(J)$ for another is among those listed in Theorem~\ref{MT}, if $W_J$ is one of the specified parabolic subgroups. However, it is far from necessary to allow all of these list initial moves if the only objective is to connect all reduced $\Ss$-expressions that represent the same twisted involution. In fact, it follows from the proof of Theorem~\ref{MT} that it is necessary and sufficient to allow the replacement of one fixed prefix whose last letter is in one connected component of $G(w_0(J),\t)$, whenever this graph is disconnected, by one whose last letter is in the (only) other connected component. We next present one possible list of such replacements.

\begin{Thm} \label{minimal}
Let $(W,S)$ be a Coxeter system with an involutive automorphism $\t$. Suppose $J \seq S$ is $\t$-stable. Consider the following moves, with generator indexing as in Figure~\ref{indices}:
\begin{itemize}
  \item When $W_J$ is of type~$A_3$ and $\t_J \neq \id$:
\[
[\s_2\s_3\s_1\s_2] \; \longleftrightarrow \; [\s_2\s_3\s_2\s_1]
\]
  \item When $W_J$ is of type~$B_3$:
\[
[\s_1\s_2\s_3\s_1\s_2\s_1] \;\longleftrightarrow \; [\s_1\s_2\s_3\s_2\s_1\s_2]
\]
  \item When $W_J$ is of type~$D_4$ and $\t_J=\id$:
\[
[\s_4\s_2\s_1\s_3\s_2\s_1\s_3\s_4] \; \longleftrightarrow \; [\s_4\s_2\s_1\s_3\s_2\s_1\s_4\s_3]
\]
  \item When $W_J$ is of type~$H_3$:
\[
[\s_1\s_3\s_2\s_1\s_3\s_2\s_1\s_3\s_2] \; \longleftrightarrow \; [\s_1\s_3\s_2\s_1\s_3\s_2\s_1\s_2\s_3]
\]
\item When $W_J$ is of type~$I_2(m)$, $3 \leq m<\1$, and $\t_J=\id$:
\[
\underbrace{[\s_1\s_2\s_1 \cd]}_{\h{$\lceil{(m(s_1,s_2)+1)/2}\rceil$ letters}} \longleftrightarrow \underbrace{[\s_2\s_1\s_2 \cd]}_{\h{$\lceil{(m(s_1,s_2)+1)/2}\rceil$ letters}}
\]
\item When $W_J$ is of type~$I_2(m)$, $2 \leq m<\1$, and $\t_J \neq \id$:
\[
\underbrace{[\s_1\s_2\s_1 \cd]}_{\h{$\lceil{m(s_1,s_2)/2}\rceil$ letters}} \longleftrightarrow \underbrace{[\s_2\s_1\s_2 \cd]}_{\h{$\lceil{m(s_1,s_2)/2}\rceil$ letters}}
\]
\end{itemize}
If $w \in \J(\t)$, then any two reduced $\Ss$-expressions for $w$ can be connected by a sequence of braid moves and initial moves of the listed kinds.
\end{Thm}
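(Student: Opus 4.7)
The plan is to strengthen the conclusion of Theorem~\ref{MT} by showing that every list initial move appearing there can be simulated by a sequence of braid moves together with the specific moves tabulated in the statement. Since a list initial move replaces one element of $\Rh_\t(w_0(J))$ at the beginning of a reduced $\Ss$-expression by another, it suffices to prove that for every $\t$-stable $J\seq S$ of one of the types listed in Lemma~\ref{disconnected}, any two elements of $\Rh_\t(w_0(J))$ can be connected using only braid moves and the single tabulated move corresponding to that type. Combined with the proof of Theorem~\ref{MT}, this will then yield the full statement by induction on the rank $\rho(w)$ of the twisted involution $w$ under consideration.

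The central observation is that by Lemma~\ref{disconnected}, $G(w_0(J),\t)$ has exactly two connected components. Rerunning the argument of the proof of Theorem~\ref{MT} internally to $\Rh_\t(w_0(J))$, any two reduced $\Ss$-expressions for $w_0(J)$ whose last letters lie in the same component $C$ are already connected by braid moves and moves of strictly smaller rank (via the three usual subcases: common last letter; adjacent last letters, where Lemma~\ref{group} supplies matching expressions linked by a braid move; or a path in $C$). By the inductive hypothesis these smaller-rank moves may be assumed to be among the tabulated ones, so $\Rh_\t(w_0(J))$ partitions into at most two equivalence classes, indexed by the two components of $G(w_0(J),\t)$.

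It then remains to verify, for each of the six listed types, that the tabulated move links the two equivalence classes, namely that both of its sides are reduced $\Ss$-expressions for $w_0(J)$ and that their last letters lie in different components of $G(w_0(J),\t)$. The components are identified explicitly in the proof of Lemma~\ref{disconnected}: in every non-dihedral listed case $G(w_0(J),\t)$ is the complement of the Coxeter graph of $W_J$, so for types $A_3$, $B_3$ and $H_3$ the central generator forms one component and the two extremal generators the other, for type $D_4$ the branching node forms one component and the three leaves the other, and in both dihedral cases the components are the singletons $\{s_1\}$ and $\{s_2\}$. Inspection of the last letters of the tabulated $\Ss$-expressions then confirms that they lie in different components in every case.

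The main obstacle is verifying that each tabulated word is genuinely a reduced $\Ss$-expression for $w_0(J)$. In the dihedral families this follows from Lemma~\ref{mixed} together with the rank formula $\ell=2\rho-\ell^\t$ (see \cite[\Th~4.8]{Hultman1}), which pin down $\rho(w_0(J))$ for $I_2(m)$ according to whether $\t_J$ is trivial and match the length prescribed in the statement. For the four three- and four-generator types it is a bounded case check: applying Lemma~\ref{reduced} converts each tabulated $\Ss$-expression into an ordinary word in $S^*$ whose length can be compared against $\ell(w_0(J))$, simultaneously establishing reducedness and that the expression represents $w_0(J)$.
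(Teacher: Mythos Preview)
Your proposal is correct and follows essentially the same approach as the paper: the paper does not give a formal proof of this theorem but derives it from the observation (in the paragraph preceding the statement) that the proof of Theorem~\ref{MT} shows it is enough to fix, for each listed $J$, a single move bridging the two connected components of $G(w_0(J),\t)$, which is exactly what you spell out. One minor phrasing issue: your first paragraph says connecting $\Rh_\t(w_0(J))$ requires ``only braid moves and the single tabulated move corresponding to that type,'' whereas (as your second paragraph correctly acknowledges) the smaller-rank tabulated moves are also used via the inductive hypothesis.
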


\begin{figure}[tb]
\begin{subfigure}[b]{0.3\textwidth}
\centering
\begin{tikzpicture}[scale=1.5]
  \vertex (0) at (0,0) [label=below:$s_1$] {}; \vertex (1) at (1,0) [label=below:$s_2$] {}; \vertex (2) at (2,0) [label=below:$s_3$] {};
  \draw (0)--(1)--(2);
\end{tikzpicture}
\caption*{$A_3$}
\bigskip
\end{subfigure}
\begin{subfigure}[b]{0.3\textwidth}
\centering
\begin{tikzpicture}[scale=1.5]
  \vertex (0) at (0,0) [label=below:$s_1$] {}; \vertex (1) at (1,0) [label=below:$s_2$] {}; \vertex (2) at (2,0) [label=below:$s_3$] {};
  \draw (0)--node[above]{\footnotesize $4$}(1)--(2);
\end{tikzpicture}
\caption*{$B_3$}
\bigskip
\end{subfigure}
\begin{subfigure}[b]{0.3\textwidth}
\centering
\begin{tikzpicture}[scale=1.5]
  \vertex (x) at (1,1) [label=above:$s_1$] {};
  \vertex (0) at (0,0) [label=below:$s_2$] {}; \vertex (1) at (1,0) [label=below:$s_3$] {}; \vertex (2) at (2,0) [label=below:$s_4$] {};
  \draw (x)--(1); \draw (0)--(1)--(2);
\end{tikzpicture}
\caption*{$D_4$}
\bigskip
\end{subfigure}
\begin{subfigure}[b]{0.3\textwidth}
\centering
\begin{tikzpicture}[scale=1.5]
  \vertex (0) at (0,0) [label=below:$s_1$] {}; \vertex (1) at (1,0) [label=below:$s_2$] {}; \vertex (2) at (2,0) [label=below:$s_3$] {};
  \draw (0)--node[above]{\footnotesize $5$}(1)--(2);
\end{tikzpicture}
\caption*{$H_3$}
\bigskip
\end{subfigure}
\begin{subfigure}[b]{0.3\textwidth}
\centering
\begin{tikzpicture}[scale=1.5]
  \vertex (0) at (0,0) [label=below:$s_1$] {}; \vertex (1) at (1,0) [label=below:$s_2$] {};
  \draw (0)--node[above]{\footnotesize $m$}(1);
\end{tikzpicture}
\caption*{$I_2(m)$}
\bigskip
\end{subfigure}
\caption{The Coxeter graphs in types~$A_3$, $B_3$, $D_4$, $H_3$, and $I_2(m)$.} \label{indices}
\end{figure}
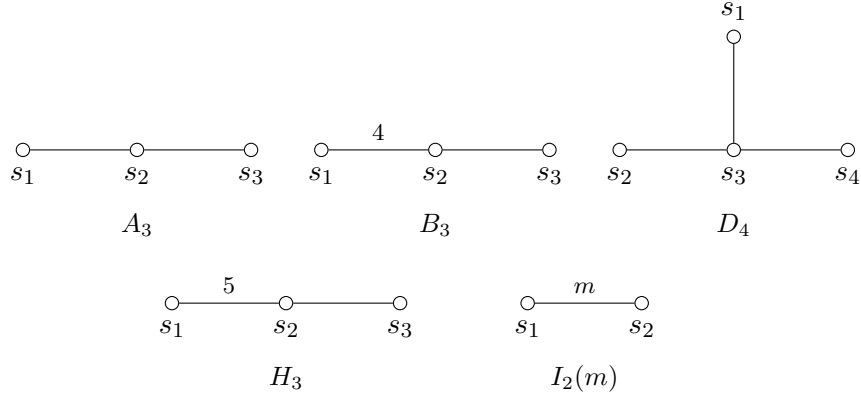

Of course, many other choices than those stated in \Th~\ref{minimal} are possible. A different selection can be found in Marberg~\cite{Marberg2}. Those we have chosen are involution braid relations in the sense of \cite{H-M-P}; recall the discussion about those from the introduction. Thus, the theorem conveys a minimal set of involution braid relations to add to the ordinary braid relations in order to connect all reduced $\Ss$-expressions of any twisted involution. As an example, Figure~\ref{big} illustrates how \Th~\ref{minimal} connects the various reduced $\Ss$-expressions for the longest element in type~$A_3$.

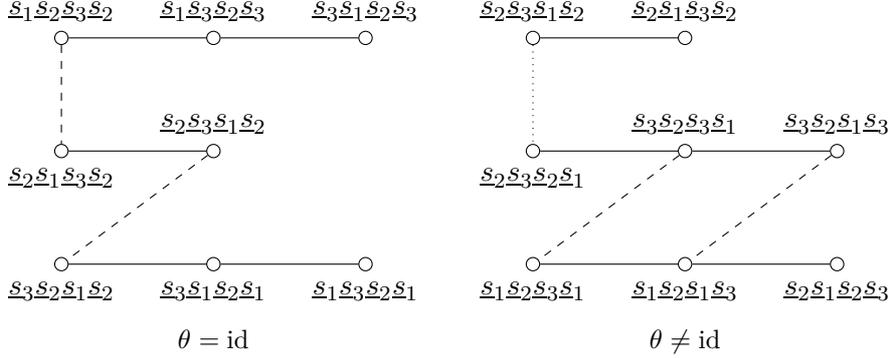
\begin{figure}[tb]
\begin{subfigure}[b]{0.48\textwidth}
\centering
\begin{tikzpicture}[xscale=2,yscale=1.5]
  \vertex (1232) at (0,2) [label=above:$\s_1\s_2\s_3\s_2$] {}; \vertex (1323) at (1,2) [label=above:$\s_1\s_3\s_2\s_3$] {};
  \vertex (3123) at (2,2) [label=above:$\s_3\s_1\s_2\s_3$] {}; \vertex (2132) at (0,1) [label=below:$\s_2\s_1\s_3\s_2$] {};
  \vertex (2312) at (1,1) [label=above:$\s_2\s_3\s_1\s_2$] {}; \vertex (3212) at (0,0) [label=below:$\s_3\s_2\s_1\s_2$] {};
  \vertex (3121) at (1,0) [label=below:$\s_3\s_1\s_2\s_1$] {}; \vertex (1321) at (2,0) [label=below:$\s_1\s_3\s_2\s_1$] {};
  \draw (1232)--(1323)--(3123);
  \draw (2132)--(2312);
  \draw (3212)--(3121)--(1321);
  \draw[dashed] (1232)--(2132);
  \draw[dashed] (2312)--(3212);
\end{tikzpicture}
\caption*{$\t=\id$}
\bigskip
\end{subfigure}
\begin{subfigure}[b]{0.48\textwidth}
\centering
\begin{tikzpicture}[xscale=2,yscale=1.5]
  \vertex (2312) at (0,2) [label=above:$\s_2\s_3\s_1\s_2$] {}; \vertex (2132) at (1,2) [label=above:$\s_2\s_1\s_3\s_2$] {};
  \vertex (2321) at (0,1) [label=below:$\s_2\s_3\s_2\s_1$] {}; \vertex (3231) at (1,1) [label=above:$\s_3\s_2\s_3\s_1$] {};
  \vertex (3213) at (2,1) [label=above:$\s_3\s_2\s_1\s_3$] {}; \vertex (1231) at (0,0) [label=below:$\s_1\s_2\s_3\s_1$] {};
  \vertex (1213) at (1,0) [label=below:$\s_1\s_2\s_1\s_3$] {}; \vertex (2123) at (2,0) [label=below:$\s_2\s_1\s_2\s_3$] {};
  \draw (2312)--(2132);
  \draw (2321)--(3231)--(3213);
  \draw (1231)--(1213)--(2123);
  \draw[dotted] (2312)--(2321);
  \draw[dashed] (3231)--(1231);
  \draw[dashed] (3213)--(1213);
\end{tikzpicture}
\caption*{$\t \neq \id$}
\bigskip
\end{subfigure}
\caption{The reduced $\Ss$-expressions for the longest element in type~$A_3$, where solid lines denote braid moves. For $\t=\id$, dashed lines indicate the $I_2(3)$ list initial move. When $\t \neq \id$, dashed lines indicate the $I_2(2)$ list initial move, and the dotted line represents the $A_3$ initial move specified in \Th~\ref{minimal}.} \label{big}
\end{figure}

\section{Special cases} \label{applications}

There are many situations where there are few list initial moves possible. In this final section, we present some consequences of the main result that arise in such settings. A \emph{half-braid move} is a list initial move of type~$I_2(m)$, see the list given in \Th~\ref{minimal}.

\begin{Cor} \label{MC}
Let $(W,S)$ be a Coxeter system with an involutive automorphism $\t$, and let $w \in \J(\t)$. Suppose $(W,S)$ does not have a $\t$-stable parabolic subgroup $W_J$ of type~$B_3$, $D_4$, or $H_3$ with $\t_J=\id$, nor one of type~$A_3$ with $\t_J \neq \id$. Then any two reduced $\Ss$-expressions for $w$ can be connected by a sequence of braid moves and half-braid moves.
\end{Cor}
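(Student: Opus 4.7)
The plan is to deduce Corollary~\ref{MC} as an immediate consequence of Theorem~\ref{MT}. Fix $w \in \J(\t)$ and consider the sequence of braid moves and list initial moves supplied by Theorem~\ref{MT} which connects any two reduced $\Ss$-expressions for $w$. Each list initial move replaces a prefix in $\Rh_\t(w_0(J))$ by another such prefix, where $J \seq S$ is $\t$-stable and $W_J$ belongs to one of the six categories enumerated in Theorem~\ref{MT}.

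The next step is to exploit the hypothesis to eliminate categories. By assumption, $(W,S)$ has no $\t$-stable $J \seq S$ with $W_J$ of type $A_3$ and $\t_J \neq \id$, nor any with $W_J$ of type $B_3$, $D_4$ (with $\t_J=\id$), or $H_3$. Consequently, no list initial move of any of these four kinds can arise in the sequence provided by Theorem~\ref{MT}: such a move would require the existence of precisely such a $\t$-stable $J$. The only remaining categories from Theorem~\ref{MT} are the two dihedral ones, namely $W_J$ of type $I_2(m)$ with $3 \leq m < \1$ and $\t_J = \id$, and $W_J$ of type $I_2(m)$ with $2 \leq m < \1$ and $\t_J \neq \id$.

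By the definition stated at the beginning of Section~\ref{applications} (a half-braid move is precisely a list initial move of type $I_2(m)$), these remaining moves are exactly the half-braid moves. Hence the sequence furnished by Theorem~\ref{MT} uses only braid moves and half-braid moves, which is the desired conclusion.

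There is no real obstacle here; the argument is purely a bookkeeping reduction from the classification in Theorem~\ref{MT}. The only thing to verify is that the categories listed in the hypothesis of Corollary~\ref{MC} correspond verbatim to four of the six cases of Theorem~\ref{MT}, which they do after observing that in types $B_3$ and $H_3$ the qualifier ``$\t_J = \id$'' is automatic because these Coxeter systems admit no non-trivial automorphism.
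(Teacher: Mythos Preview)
Your argument is correct and is exactly the intended deduction: the paper states Corollary~\ref{MC} without an explicit proof because it follows immediately from Theorem~\ref{MT} by discarding the four non-dihedral cases, which is precisely what you do. Your closing remark about $\t_J=\id$ being automatic in types $B_3$ and $H_3$ matches the observation in the Remarks following Theorem~\ref{MT}.
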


Marberg~\cite[\Con~1.7]{Marberg2} conjectured that the conclusion of \Co~\ref{MC} holds whenever $\t$ fixes no element of $S$. Since the hypotheses are satisfied in that situation, we have confirmed the conjecture. Marberg's \Cons~1.8 and 1.9 also follow directly from \Th~\ref{MT}.\footnote{\cite[\Con~1.9]{Marberg2} predicts that every $\Rh_\t(w)$ can be connected using moves that satisfy certain assumptions. These assumptions imply that all braid moves and all list initial moves can be performed.}

Another interesting consequence of \Th~\ref{MT} concerns \emph{right-angled} Coxeter systems, i.e., those that satisfy $m(s,s') \in \{2,\1\}$ for all generators $s \neq s'$. Observe that if $\t=\id$, no list initial moves are available in a right-angled group. Let $I(W)$ denote the set of involutions in $W$, and recall that $\J(\id)=I(W)$.

\begin{Prop} \label{right}
If $W$ is right-angled and $\t=\id$, then the map $[s_1 \cd s_k] \mapsto [\s_1 \cd \s_k]$ sends reduced words to reduced $\Ss$-expressions, and it induces a bijection $W \to I(W)$.
\end{Prop}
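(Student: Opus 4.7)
The strategy is to invoke \Th~\ref{MT} to collapse all non-braid moves and then to use that $\phi \colon [s_1 \cd s_k] \mapsto [\s_1 \cd \s_k]$ interacts nicely with commutations. When $W$ is right-angled and $\t = \id$, none of the parabolic subgroups listed in \Th~\ref{MT} can occur: the types $B_3$, $D_4$, $H_3$, and $I_2(m)$ with $m \geq 3$ all require a bond $m(s,s') \geq 3$, while the cases $A_3$ with $\t_J \neq \id$ and $I_2(m)$ with $\t_J \neq \id$ are ruled out by $\t = \id$. Hence \Th~\ref{MT} tells us that any two reduced $\Ss$-expressions for a given $v \in I(W)$ are connected by $\Ss$-braid moves, which in this setting are exactly the commutations $[\s\s'] \leftrightarrow [\s'\s]$ with $m(s,s') = 2$.

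I would first prove by induction on $k$ that if $[s_1 \cd s_k]$ is a reduced word in $S^*$, then $[\s_1 \cd \s_k]$ is a reduced $\Ss$-expression. Writing $v' = \s_1 \cd \s_{k-1}$, Lemma~\ref{mixed} gives $\rho(v'\s_k) \in \{k, k-2\}$, so it suffices to rule out $s_k \in D_R(v')$. In that case some reduced $\Ss$-expression for $v'$ ends with $\s_k$, and by the first paragraph it is obtained from $[\s_1 \cd \s_{k-1}]$ via a sequence of commutations, which merely permutes the letters. Thus $\s_k = \s_i$ for some $i < k$. Since a sequence of commutations of adjacent letters can only reverse the relative order of two letters if those letters commute in the Coxeter graph, $s_k = s_i$ commutes with every $s_j$ for $i < j < k$. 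But then $s_1 \cd s_k = s_1 \cd s_{i-1} s_{i+1} \cd s_{k-1}$ in $W$, contradicting reducedness of $[s_1 \cd s_k]$.

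Given this, \Th~\ref{word} ensures that $w \mapsto \s_1 \cd \s_k$, using any reduced word $[s_1 \cd s_k]$ for $w$, yields a well-defined map $\bar\phi \colon W \to I(W)$: two reduced words for $w$ are connected by braid moves (i.e., commutations in the right-angled case), and these correspond to $\Ss$-braid moves, which preserve the represented twisted involution. A candidate inverse $\psi \colon I(W) \to W$ sends $v$ to $s_1 \cd s_k$ for any reduced $\Ss$-expression $[\s_1 \cd \s_k]$ of $v$; the first paragraph again guarantees well-definedness. To check that $s_1 \cd s_k$ really has length $k$, I would argue symmetrically: if $[s_1 \cd s_k]$ were not reduced in $W$, then since $W$ is right-angled a standard consequence of \Th~\ref{word} produces $i < j$ with $s_i = s_j$ commuting with $s_{i+1}, \cd, s_{j-1}$; the analogous commutations in $\Ss^*$, together with the fact that $[\s\s]$ acts as the identity (\Def~\ref{action}), yield a shorter $\Ss$-expression for $v$, contradicting reducedness.

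With both reducedness statements in hand, $\bar\phi \circ \psi$ and $\psi \circ \bar\phi$ unwind to the respective identities, and the proposition follows. The heart of the argument, used in both directions, is the elementary combinatorial principle that in a sequence of commutations of adjacent letters, two letters end up in reversed relative order only if they commute; everything else is bookkeeping on top of \Th~\ref{MT} and \Th~\ref{word}.
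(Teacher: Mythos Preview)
Your proof is correct and follows essentially the same approach as the paper: both use \Th~\ref{MT} to reduce to commutations and then transfer braid moves back and forth between $S^*$ and $\Ss^*$. The paper's argument is a bit more streamlined---where you identify a specific $\s_i$ that moved to the end and argue it commutes with the intermediate letters, the paper simply applies the same sequence of commutations to $[s_1 \cdots s_{k-1}]$ to obtain a reduced word ending in $s_k$, immediately contradicting reducedness of $[s_1 \cdots s_k]$; and the reverse direction, which you spell out carefully, is dispatched in the paper by the symmetric argument implicit in ``it is then clear''.
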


\begin{proof}
In order to obtain a contradiction, assume that $[s_1 \cd s_k]$ is reduced and $[\s_1 \cd \s_k]$ is not. If $k$ is minimal among all expressions with this property, $s_k \in D_R(\s_1 \cd \s_{k-1})$. Hence, by \Th~\ref{MT}, $[\s_1 \cd \s_{k-1}]$ is related to a reduced $\Ss$-expression ending with $\s_k$ by a sequence of braid moves. But then the same sequence of braid moves transforms $[s_1 \cd s_{k-1}]$ into a word ending with $s_k$. This contradicts the reducedness of $[s_1 \cd s_k]$, proving the first claim. It is then clear that $[s_1 \cd s_k] \mapsto [\s_1 \cd \s_k]$ provides a bijection between reduced words and reduced $\Ss$-expressions. Since it respects braid moves, the second claim follows.
\end{proof}

Given a subset $X \seq W$, let $\Br(X)$ denote the poset on $X$ with the order induced by the Bruhat order on $W$.

\begin{Cor} \label{iso}
If $(W,S)$ is right-angled, then $\Br(W)$ and $\Br(I(W))$ are isomorphic as posets.
\end{Cor}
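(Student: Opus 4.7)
The plan is to use the bijection $\phi \colon W \to I(W)$ from Proposition~\ref{right} and show that it is an isomorphism of posets with respect to the Bruhat order. The key input is that $\phi$ restricts to a bijection between reduced words in $S^*$ and reduced $\Ss$-expressions in $\Ss^*$ (both of them induced by the operation of adding or removing underlines). Combined with the two subword properties (Lemmas~\ref{subword1} and~\ref{subword2}), this will let us transport the order in both directions.

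More precisely, I would fix $u,w \in W$ and argue the following two implications. For the forward direction, suppose $u \leq w$ in $\Br(W)$. Choose a reduced word $[s_1 \cd s_k]$ for $w$; by Lemma~\ref{subword1} some subword $[s_{i_1} \cd s_{i_m}]$ is a reduced word for $u$. Applying $\phi$ at the level of words (as in Proposition~\ref{right}), $[\s_1 \cd \s_k]$ is a reduced $\Ss$-expression for $\phi(w)$ and $[\s_{i_1} \cd \s_{i_m}]$ is a reduced $\Ss$-expression for $\phi(u)$. Lemma~\ref{subword2} then yields $\phi(u) \leq \phi(w)$ in $\Br(I(W))$.

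For the reverse direction, suppose $\phi(u) \leq \phi(w)$ in $\Br(I(W))$. Fix a reduced word $[s_1 \cd s_k]$ for $w$; by Proposition~\ref{right}, $[\s_1 \cd \s_k]$ is a reduced $\Ss$-expression for $\phi(w)$. Lemma~\ref{subword2} supplies a subword $[\s_{i_1} \cd \s_{i_m}]$ that is a reduced $\Ss$-expression for $\phi(u)$. Invoking the word-level bijection from Proposition~\ref{right} in reverse, $[s_{i_1} \cd s_{i_m}]$ is a reduced word, and it represents precisely $u$ (since $\phi$ is a bijection on elements and sends $s_{i_1} \cd s_{i_m}$ to $\phi(u)$). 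Lemma~\ref{subword1} then gives $u \leq w$ in $\Br(W)$.

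I do not anticipate a genuine obstacle here: once Proposition~\ref{right} is available, the subword characterisations of Bruhat order in $W$ and in $I(W)$ match up letter-for-letter through the bar-adding bijection, and the corollary is essentially a formal consequence. The only point to be mildly careful about is that in the reverse direction one uses the fact (contained in the proof of Proposition~\ref{right}) that $\phi$ restricts to a bijection between reduced words and reduced $\Ss$-expressions, not merely a surjection of sets of elements.
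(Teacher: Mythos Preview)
Your proposal is correct and follows exactly the approach the paper takes: the paper's proof simply says that the corollary follows from Proposition~\ref{right} together with Lemmas~\ref{subword1} and~\ref{subword2}, and your argument is precisely the unpacking of that sentence. The only difference is that you spell out the two directions of the order-preservation explicitly, which the paper leaves implicit.
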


\begin{proof}
This follows from \Pro~\ref{right} together with Lemmas~\ref{subword1} and \ref{subword2}.
\end{proof}

The only finite, right-angled $(W,S)$ are of type~$A_1 \times \cd \times A_1$. In these groups, $I(W)=W$, so \Co~\ref{iso} is not particularly amusing. If $W$ is infinite, however, the inclusion $I(W) \subset W$ is proper. Hence a copy of $\Br(W)$ sits inside $\Br(W)$ as a proper subposet. Thus $\Br(W)$ contains an infinite sequence of induced subposets $P_i$, all of them isomorphic to $\Br(W)$, such that
\[
\Br(W)=P_0 \supset P_1 \supset P_2 \supset \cd.
\]

\bibliographystyle{amsplain}

\bibliography{Referenser_bara_initialer}

\end{document}